\def\date{9 August 2016}
\newtheorem{proposition}{Proposition}[section]
\newtheorem{lemma}[proposition]{Lemma}
\newtheorem{corollary}[proposition]{Corollary}
\newtheorem{theorem}[proposition]{Theorem}
\newtheorem{conjecture}[proposition]{Conjecture}
\newtheorem{question}[proposition]{Question}
\newtheorem{thm}[proposition]{Theorem}
\theoremstyle{definition}
\newtheorem{definition}[proposition]{Definition}
\newcommand{\mcal}{\mathcal}
\newcommand{\A}{\mcal{A}}
\newcommand{\G}{\mcal{G}}
\renewcommand{\P}{\mcal{P}}
\begin{document}
\font\smallrm=cmr8

\baselineskip=12pt
\phantom{a}\vskip .25in
\centerline{{\bf Bounded Diameter Arboricity}}
\vskip.4in
\centerline{{\bf Martin Merker}%
\footnote{\texttt{marmer@dtu.dk.} Supported by ERC Advanced Grant GRACOL, project number 320812.}}
\smallskip
\centerline{Department of Applied Mathematics and Computer Science}
\smallskip
\centerline{Technical University of Denmark}
\smallskip
\centerline{Lyngby, Denmark}
\bigskip
\centerline{and}
\bigskip
\centerline{{\bf Luke Postle}
\footnote{\texttt{lpostle@uwaterloo.ca.} Canada Research Chair in Graph Theory. Partially supported by NSERC under Discovery Grant No. 2014-06162, the Ontario Early Reseacher Awards program and the Canada Research Chairs program.}\textsuperscript{,}\footnote{\label{note3}Part of this research was carried out during a visit of the second author at the first author's institution.}} 
\smallskip
\centerline{Department of Combinatorics and Optimization}
\smallskip
\centerline{University of Waterloo}
\smallskip
\centerline{Waterloo, Ontario, Canada}

\vskip 1in \centerline{\bf ABSTRACT}
\bigskip

{
\parshape=1.0truein 5.5truein
\noindent
We introduce the notion of \emph{bounded diameter arboricity}. Specifically, the \emph{diameter-$d$ arboricity} of a graph is the minimum
number $k$ such that the edges of the graph can be partitioned into $k$ forests each of whose components has diameter at most $d$. A class of graphs has bounded diameter arboricity $k$ if there exists a natural number $d$ such that every graph in the class has diameter-$d$ arboricity at most $k$. 
We conjecture that the class of graphs with arboricity at most $k$ has bounded diameter arboricity at most $k+1$. We prove this conjecture for $k\in \{2,3\}$ by proving the stronger assertion that the union of a forest and a star forest can be partitioned into two forests of diameter at most 18. 
We use these results to characterize the bounded diameter arboricity for planar graphs of girth at least $g$ for all $g\ne 5$. As an application we show that every 6-edge-connected planar (multi)graph contains two edge-disjoint $\frac{18}{19}$-thin spanning trees. 
}

\vfill \baselineskip 11pt \noindent \date.
\vfil\eject
\baselineskip 18pt

\section{Introduction}

In this paper, all graphs are considered to be finite and simple unless stated otherwise. A \emph{decomposition} of a graph $G$ consists of edge-disjoint subgraphs whose union is $G$. The \emph{arboricity} $\Upsilon(G)$ of a graph $G$ is the minimum number $k$ such that $G$ can be decomposed into $k$ forests. An obvious necessary condition for a graph to have arboricity at most $k$ is that $|E(H)|\le k(|V(H)|-1)$ for every subgraph $H$ of $G$. In 1964, Nash-Williams~\cite{NashWilliams}  proved that this condition is also sufficient. Given Nash-Williams' result it is clear that arboricity is closely linked to the \emph{maximum average degree} of a graph, that is the maximum of the average degree of all subgraphs.

A natural question is to wonder what conditions are necessary to demand more structure from the forests in the partition. To date, there have been two prominent variants of arboricity that address this question. The first variant, introduced by Harary~\cite{Harary} in 1970 is \emph{linear arboricity}, denoted $\Upsilon_\ell (G)$, wherein the graph is decomposed into linear forests, where a \emph{linear forest} is the disjoint union of paths. Clearly, the linear arboricity is intimately connected to the maximum degree of a graph as $\Delta(G)/2 \le \Upsilon_\ell (G) \le \Delta(G)+1$. The \emph{linear arboricity conjecture} posed in~\cite{LAConj} states that $\Upsilon_\ell (G) \le \frac{\Delta(G)+1}{2}$. Using probabilistic methods, Alon~\cite{Alon} showed that $\Upsilon_\ell (G) = \Delta(G)/2 + O(\Delta(G) \log\log \Delta(G) / \log \Delta(G))$. 

The second variant is \emph{star arboricity}, denoted $\Upsilon_s(G)$, wherein the graph is decomposed into star forests, where a \emph{star forest} is the disjoint union of stars. Clearly, the star arboricity is at most twice the arboricity since every forest can be decomposed into two star forests. However, this is best possible due to a construction of Alon et al.~\cite{Star92}. Nevertheless for certain interesting graph classes the star arboricity can be lower. For example, Algor and Alon~\cite{Star1} showed that $\Upsilon_s(G) \le d/2 + O(d^{2/3}\log^{1/3} d)$ for $d$-regular graphs. Hakimi et al.~\cite{Star2} showed that $\Upsilon_s(G)$ is at most the acyclic chromatic number of $G$. In 1979, Borodin~\cite{Borodin} showed that planar graphs have acyclic chromatic at most 5 and thus star arboricity at most 5, which is best possible as shown by Algor and Alon~\cite{Star1}.

Yet both of these definitions seem unsatisfactory given the above results, linear arboricity for its relationship to the maximum degree and star arboricity for its lack of general improvement over the trivial bound. Therefore, we are motivated to find a new form of additional structure which could avoid these problems. However, requiring the components of the forest to have bounded size is infeasible given that a star has arboricity one. Thus, we introduce the following definition.

\begin{definition}
The \emph{diameter-$d$ arboricity} $\Upsilon_d(G)$ of a graph $G$ is the minimum number $k$ such that the edges of $G$ can be partitioned into $k$ forests each of whose components have diameter at most $d$. The \emph{bounded diameter arboricity} $\Upsilon_{bd}(\G)$ of a class of graphs $\G$ is the minimum number $k$ for which there exists a natural number $d$ such that every $G\in \G$ has diameter-$d$ arboricity at most $k$. 
\end{definition}

While we are interested in what diameters can be obtained, we are in general more interested in which graphs have any bound on the diameter. Such a notion though only makes sense when referring to graph classes, e.g. planar graphs or graphs of arboricity at most $k$. To that end let $\mathcal{A}_k$ denote the class of graphs with arboricity at most $k$. Clearly $\Upsilon_{bd}(\A_k) \le \Upsilon_2(\A_k) \le 2k$ since every forest can be partitioned into two star forests. Similarly $\Upsilon_{bd}(\mathcal{A}_k)$ is strictly greater than $k$. To see this note that a graph which is the union of $k$ spanning trees if decomposed into $k$ forests must necessarily be decomposed into $k$ spanning trees. Since there exists graphs of arbitrarily large diameter which are the union of $k$ spanning trees it follows that every such decomposition has a forest with a component of large diameter. But is it possible that by allowing a few more forests we can in fact obtain components of bounded diameter? We make the following very strong conjecture.

\begin{conjecture}\label{PlusOneConj}
The class of graphs with arboricity at most $k$ has bounded diameter arboricity $k+1$, i.e. $\Upsilon_{bd}(\mathcal{A}_k) = k+1$.
\end{conjecture}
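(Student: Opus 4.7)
The plan is to reduce the conjecture for arbitrary $k$ to an inductive generalization of the paper's base lemma, and then proceed by induction on $k$. The approach used for $k\in\{2,3\}$ rests on pairing forests with star forests extracted from a Nash--Williams decomposition, and a short counting argument shows this naive pairing yields at best $k+\lceil k/3\rceil$ bounded-diameter forests, which equals $k+1$ only when $k\le 3$; so a genuinely stronger auxiliary result is required.

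Specifically, the target is the following.

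Generalized Lemma (target): For every $j\ge 2$ there exists $d=d(j)$ such that the union of $j-1$ forests and one star forest has diameter-$d$ arboricity at most $j$.

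Granted this, the conjecture follows at once: given $G\in\A_k$, write $G=F_1\cup\cdots\cup F_k$ by Nash--Williams, split $F_k=S_1\cup S_2$ into two star forests (every forest has star arboricity two, e.g.\ via depth-parity coloring of a rooted version), apply the Generalized Lemma with $j=k$ to $F_1\cup\cdots\cup F_{k-1}\cup S_1$ to obtain $k$ forests of diameter at most $d(k)$, and append $S_2$ (itself a diameter-$2$ forest) as the $(k+1)$-st forest.

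The Generalized Lemma is attacked by induction on $j$, with base case $j=2$ supplied by the paper. For the inductive step one naturally applies the base lemma to the pair consisting of the last forest and the star forest, obtaining two bounded-diameter forests; the induction closes provided at least one of these two outputs is itself a star forest, so that the inductive hypothesis can be invoked on the remaining $j-2$ forests together with this new star forest. The critical subgoal is therefore the following strengthening of the base lemma: every union of a forest and a star forest decomposes into a bounded-diameter forest and a star forest (whose components, being stars, automatically have diameter at most $2$). Small examples support this — long paths, subdivided stars, and the graphs analyzed in the paper all admit such a decomposition — and I expect that the paper's ``cutting long paths'' construction can be refined so that one side of each cut consists only of short star-shaped pieces, at the cost of a larger diameter constant that then compounds through the induction on $j$, likely giving a $d(k)$ exponential in $k$. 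The main obstacle is precisely this strengthened base lemma; should it resist a constructive proof, a fallback is a global swap argument on a Nash--Williams decomposition of $G$ into $k+1$ forests, iteratively reassigning edges to decrease a potential function measuring total excess path length until every component meets the required diameter bound.
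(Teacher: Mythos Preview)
The statement you are attempting to prove is Conjecture~\ref{PlusOneConj}, which the paper leaves open in general; only the cases $k\in\{2,3\}$ are established (via Theorem~\ref{TreeStar} and Corollary~\ref{2}). Your reduction of the conjecture to the ``Generalized Lemma'' is correct, and you are right that the naive pairing argument used for $k\le 3$ cannot reach $k+1$ for larger $k$. But the inductive proof of the Generalized Lemma rests entirely on the ``strengthened base lemma'' --- that a forest plus a star forest decomposes into a bounded-diameter forest \emph{and a star forest} --- and this you do not prove. That is a genuine gap, not a routine refinement: the strengthened lemma is strictly stronger than Theorem~\ref{TreeStar} (which only guarantees two bounded-diameter forests, not that one of them is a star forest), and the paper itself formulates the weaker iterable statement as an open problem (Conjecture~\ref{TreeBush}). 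In fact the paper explicitly singles out Conjecture~\ref{TreeBush} as ``a strategy for proving Conjecture~\ref{PlusOneConj}'', so the route you propose is precisely the one the authors identify as the natural next step and leave unresolved; your version demands even more of the output decomposition.

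There is also a concrete reason to be suspicious of the strengthened base lemma as stated. Consider the fan graph $G$ on $v_1,\dots,v_n$ (the path $v_2\cdots v_n$ together with all edges $v_1v_j$), which is the union of the path $v_1v_2\cdots v_n$ and the star at~$v_1$. Here $|E(G)|=2n-3$, so in any decomposition $G=F'\cup S'$ with $F'$ a forest one needs $|E(S')|\ge n-2$; since $v_1$ is the only vertex of large degree, almost all of $S'$ must lie in the star at~$v_1$, which forces long stretches of the path into~$F'$. Working through the constraints shows that keeping $F'$ acyclic while $S'$ remains a star forest pushes the diameter of $F'$ up with~$n$. At minimum this indicates that the ``cutting long paths'' heuristic from the paper's proof of Theorem~\ref{TreeStar} does not obviously adapt to force one color class to be a star forest. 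The fallback ``global swap / potential function'' idea is too vague to evaluate; absent a concrete potential and a proof that swaps strictly decrease it without creating cycles, it is not an argument.
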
 

Our first main result is that we have verified this conjecture for $k=2$ and $k=3$.

\begin{thm}\label{23}
$\Upsilon_{bd}(\mathcal{A}_2) = 3$ and $\Upsilon_{bd}(\mathcal{A}_3) = 4$.
\end{thm}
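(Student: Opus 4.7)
I would prove the two identities by matching upper and lower bounds, deferring the real combinatorial work to a single stronger lemma.

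\emph{Lower bounds.} The inequalities $\Upsilon_{bd}(\A_2) \geq 3$ and $\Upsilon_{bd}(\A_3) \geq 4$ both follow from the observation already recorded in the introduction: if $G$ is the edge-disjoint union of $k$ spanning trees, then every decomposition of $G$ into $k$ forests consists of $k$ spanning trees, and therefore has a component of diameter at least $\mathrm{diam}(G)$. It suffices thus, for each $k \in \{2,3\}$, to produce an infinite family of graphs of unbounded diameter which are unions of $k$ edge-disjoint spanning trees. By the Nash-Williams/Tutte theorem, any $2k$-edge-connected graph has this property, and $2k$-regular, $2k$-edge-connected graphs of arbitrarily large girth (hence arbitrarily large diameter) are easy to construct; this settles both lower bounds.

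\emph{Upper bounds.} These reduce immediately to the stronger statement announced in the abstract:

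$(\star)$\ \emph{The union of a forest and a star forest decomposes into two forests each of whose components has diameter at most $18$.}

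\noindent Combined with the classical fact that every forest decomposes into two star forests (root each tree and group the edges by the parity of the depth of their child endpoint), the upper bounds are short. For $\A_2$: given $G = F_1 \cup F_2$ with each $F_i$ a forest, split $F_2 = S_1 \cup S_2$ into star forests, apply $(\star)$ to $F_1 \cup S_1$ to obtain two forests $T_1, T_2$ of diameter-bounded components, and adjoin $S_2$ itself (a star forest, hence with component-diameter at most $2$); this decomposes $G$ into three forests of diameter at most $18$. For $\A_3$: given $G = F_1 \cup F_2 \cup F_3$, split $F_3 = S_1 \cup S_2$ into star forests, and apply $(\star)$ separately to the two forest-plus-star-forest unions $F_1 \cup S_1$ and $F_2 \cup S_2$; this yields four forests of diameter at most $18$ whose union is $G$. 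Edge-disjointness in both arguments is automatic because the star-forest pieces $S_1, S_2$ are already edge-disjoint and disjoint from $F_1$, $F_2$.

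\emph{Main obstacle.} All the substance lies in proving $(\star)$. The essential difficulty is asymmetric: the input forest may be an arbitrarily long path, yet in the output both forests must have bounded-diameter components, so one is forced to use the star-forest edges as ``shortcuts'' that repeatedly cut the original forest into short pieces while simultaneously keeping the two output parts acyclic. I would expect the proof to proceed by rooting each tree of the forest, processing it top-down, and, whenever a component of one of the output forests threatens to exceed a fixed depth, re-routing an edge through a nearby star center so as to break that component off. The specific constant $18$ should emerge from the bookkeeping in this rerouting procedure; the real work is finding a scheme that terminates with any uniform diameter bound at all, not in optimizing the constant.
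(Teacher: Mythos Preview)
Your reduction is exactly the paper's: the lower bounds are the ones recorded in the introduction (graphs that are unions of $k$ spanning trees force any $k$-forest decomposition to consist of spanning trees), and the upper bounds are derived from $(\star)$ precisely as in Corollary~\ref{2}, by splitting one forest into two star forests and applying $(\star)$ once (for $k=2$) or twice (for $k=3$). So the architecture of your plan matches the paper's proof of Theorem~\ref{23} verbatim.

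The gap is that you do not prove $(\star)$, and your sketch of how it should go does not resemble what the paper actually does. The paper's argument is not an ad hoc rerouting procedure that cuts the tree whenever a component gets too deep. Instead it orients everything (making the tree an out-tree and each star an out-star), then chooses a \emph{vertex} $2$-coloring $c$ and deterministically extends it to an edge $2$-coloring $\mathrm{Ext}(c)$ by a fixed local rule (Definition~\ref{def-ext}). The point is that in $\mathrm{Ext}(c)$ every vertex has in-degree at most one in each color, so monochromatic cycles are directed and monochromatic paths are unions of two dipaths; the analysis then reduces to bounding the length of a monochromatic dipath. To do this the paper introduces the auxiliary \emph{center graph} on the star centers, shows (Lemma~\ref{lemma-path}) that a monochromatic dipath passes through centers in a way that mirrors a monochromatic dipath in the center graph, and finally (Lemma~\ref{ExTame}) constructs a vertex coloring that is ``tame'' and makes monochromatic center-graph dipaths have length at most $1$. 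The constant $18 = 2\cdot 9$ falls out of Lemma~\ref{MaxLength} with $d_T=1$, $d_1=0$, $d_2=1$. None of this structure---the orientation, the vertex-to-edge extension rule, the center graph, or the notion of tame coloring---is suggested by your sketch, and a top-down ``cut when too deep'' scheme would have to contend with the acyclicity of \emph{both} output forests simultaneously, which is where the paper's global coloring approach earns its keep.
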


This result can be used to improve the general upper bound $\Upsilon_{bd}(\mathcal{A}_k) \le 2k$. 

\begin{corollary}\label{FourThirds}
$\Upsilon_{bd}(\mathcal{A}_k) \le \lceil \frac{4}{3} k \rceil$.
\end{corollary}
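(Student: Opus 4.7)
The plan is to reduce directly to Theorem~\ref{23} by processing the forests of a decomposition of $G \in \A_k$ three at a time. Writing $k = 3q + r$ with $r \in \{0,1,2\}$, I would fix any decomposition of $G$ into $k$ forests (which exists by definition of $\A_k$) and partition these forests into $q$ triples together with a leftover group of $r$ forests. Each triple, regarded as the edge-disjoint union of its three forests, is a subgraph of arboricity at most $3$ and hence, by Theorem~\ref{23}, admits a decomposition into $4$ forests each of whose components has diameter at most some constant $d_3$.

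For the leftover group I handle the three residues separately. If $r = 0$ there is nothing to do. If $r = 2$, the leftover subgraph lies in $\A_2$ and so by Theorem~\ref{23} decomposes into $3$ forests of components of diameter at most some constant $d_2$. If $r = 1$ the leftover is already a single forest; I would then decompose it into two star forests via a proper $2$-colouring of its vertices, giving two forests each of whose components is a star and in particular has diameter at most $2$. Taking $d := \max(d_2, d_3, 2)$ (a constant depending only on $k$), every piece produced this way is a forest whose components have diameter at most $d$.

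It remains only to count: the three cases produce $4q$, $4q+2$, and $4q+3$ forests for $r = 0,1,2$ respectively, and a direct check against $\lceil \tfrac{4k}{3} \rceil = \lceil 4q + \tfrac{4r}{3} \rceil$ confirms that each of these numbers equals $\lceil \tfrac{4k}{3} \rceil$. Hence $\Upsilon_d(G) \le \lceil \tfrac{4k}{3} \rceil$ and so $\Upsilon_{bd}(\A_k) \le \lceil \tfrac{4k}{3} \rceil$.

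There is no substantive obstacle here; the corollary is a packaging of Theorem~\ref{23}. The only minor point worth stating explicitly is the $r=1$ case, where one must observe that a forest can be split into two star forests (since every forest is bipartite, and within each part every vertex induces only stars with its neighbours in the other part), so that the single leftover forest contributes $2$ rather than $3$ to the total and the arithmetic lands exactly on the ceiling.
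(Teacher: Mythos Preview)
Your proof is correct and essentially identical to the paper's: both split the $k$ forests into groups of three, apply Theorem~\ref{23} to each group, and treat the remainder separately, with the paper packaging the remainder cases into the single line $\Upsilon_d(G)\le \Upsilon(G)+1$ for $\Upsilon(G)\le 3$ rather than your explicit $r\in\{0,1,2\}$ case split. One small quibble: your parenthetical justification for decomposing a forest into two star forests via the bipartition is not quite right as written (the stars centred at one side of a bipartition cover \emph{all} edges, not half of them); the standard argument instead roots each tree and splits edges by the parity of their depth, but the fact itself is true and is also used in the paper.
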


\begin{proof}
By Theorem~\ref{23} there exists a natural number $d$ such that every graph $G$ with $\Upsilon (G) \leq 3$ satisfies $\Upsilon_d (G)\leq \Upsilon (G)+1$. If $\Upsilon (G)=k$, then $G$ can be written as the union of $\ell = \lceil \frac{k}{3}\rceil$ graphs $G_1,\ldots ,G_\ell$ with $\Upsilon (G_i)= 3$ for $i\in \{1,\ldots ,\ell -1\}$ and $\Upsilon (G_\ell ) = k-3(\ell -1)$. 
Now 
$$\Upsilon_d (G) \leq \Upsilon_d (G_1) + \ldots + \Upsilon_d (G_\ell ) \leq 4(\ell -1) + k - 3(\ell -1) +1 = \left\lceil \frac{4k}{3}\right\rceil$$
and thus $\Upsilon_{bd}(\mathcal{A}_k) \le \lceil \frac{4}{3} k \rceil$.
\end{proof}

To prove Theorem~\ref{23}, we in fact prove the following stronger theorem, which we prove in Section 2.

\begin{thm}\label{TreeStar}
If $G$ is the union of a forest and a star forest, then $\Upsilon_{18}(G)\leq 2$.
\end{thm}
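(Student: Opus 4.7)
My plan is to 2-color the edges of $G = F \cup S$ based on a depth layering of $F$ to obtain the two forests $F_1, F_2$.

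Root each component of $F$ at an arbitrary vertex and let $\ell(v)$ be the depth of $v$. For a parameter $d$ (I expect $d = 4$), partition the vertices into super-layers of height $d$, with $v$ in super-layer $\lfloor \ell(v)/d \rfloor$; call the super-layer red or blue according to the parity of its index. Each within-super-layer edge of $F$ is assigned to the forest $F_1$ (red) or $F_2$ (blue) matching the color of its endpoints, and each cut edge (between adjacent super-layers) is assigned to the forest matching the color of its child endpoint. With these $F$-edges alone, each component of $F_1$ is a union of red-super-layer subtrees of depth at most $d-1$ joined through a shared blue parent via cut edges, so its diameter is at most $2d = 8$; similarly for $F_2$.

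Next, assign each edge of $S$: for each star $S_c$ in the star forest $S$, route all its $S$-edges to the forest matching the color of $c$. Then $c$ acts as a hub inside one forest, merging its $F$-only component $K_c$ with the components $K_{l_i}$ of each leaf $l_i$ via the $S$-edges $c l_i$. A worst-case diameter path in the merged component passes through $c$ on two $S$-edges, connecting two leaves at eccentricity at most $8$ in their respective components $K_{l_i}, K_{l_j}$; thus the diameter is at most $8 + 1 + 1 + 8 = 18$, matching the bound in the theorem.

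To finish the proof I must verify two properties: (i) that $F_1$ and $F_2$ remain acyclic once all $S$-edges are added, which should follow from the vertex-disjointness of the stars together with the structure of the layering; and (ii) that the diameter bound $18$ holds in the presence of cascading merges. The latter is the main obstacle: if the component $K_c$ already contains another star center $c'$ whose star $S_{c'}$ is also routed to the same forest, the merging chains through both hubs and the naive bound becomes at most $8 + 1 + 8 + 1 + 8 = 26$, exceeding $18$. Overcoming this likely requires a refinement of the assignment rule (such as a 2-coloring of star centers that forbids two centers in the same initial forest-component from being assigned to the same forest), or an iterative matroid-exchange argument that improves a given 2-forest decomposition of $G$ (which exists by Nash-Williams since $G$ has arboricity at most $2$) until the diameter constraint is satisfied.
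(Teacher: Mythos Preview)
Your proposal is explicitly incomplete---you flag the cascading-merge obstacle yourself---and the gaps are real. First, acyclicity already fails at the single-star level: if a center $c$ has two leaves $l_1,l_2$ lying in the same $F$-only component of $F_1$, then routing both edges $cl_1,cl_2$ into $F_1$ closes a cycle through that component. Nothing in the depth layering prevents two leaves of one star from landing in the same super-layer subtree, and ``vertex-disjointness of the stars'' is irrelevant since the issue involves leaves of a single star. Second, the cascade you identify is not repaired by your suggested fix: 2-coloring the centers so that no two same-colored centers share an initial component would require the auxiliary conflict graph on centers to be bipartite, which it need not be; and even then the leaves of a star may land in components containing further centers of the same color, so the merge chain does not terminate after one hop. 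The alternative matroid-exchange idea is only a hope, not an argument.

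The paper's route is quite different. It orients $T$ away from a root and each star away from its center, so every vertex has in-degree at most $2$, and then seeks an edge 2-coloring in which every vertex has in-degree at most $1$ per color; under that constraint monochromatic cycles are directed and monochromatic paths decompose into at most two dipaths, so it suffices to bound monochromatic dipath length by $9$. The edge coloring is not chosen directly but induced from a \emph{vertex} 2-coloring by a local rule (Definition~\ref{def-ext}), with ``rebellious'' leaves---those colored differently from their star center---handled carefully. The crucial device is the auxiliary digraph $\mathrm{Center}(G)$ on the star centers, with an arc $u\to v$ whenever $v$ is reachable from $u$ through at most one intermediate leaf. A tame vertex coloring whose restriction to $\mathrm{Center}(G)$ has no long monochromatic dipaths is built greedily (Lemma~\ref{ExTame}), and Lemmas~\ref{lemma-path}--\ref{MaxLength} convert this into acyclicity of the edge coloring and a dipath bound of $1+2(0+1)+6=9$. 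The star-to-star interaction that defeats your layering scheme is exactly what $\mathrm{Center}(G)$ is designed to control.
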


\begin{corollary}\label{2}
If $\Upsilon(G)\le 2$, then $\Upsilon_{18}(G)\le 3$. If $\Upsilon(G)\le 3$, then $\Upsilon_{18}(G)\le 4$. In particular, Conjecture~\ref{PlusOneConj} holds for $k=2$ and $k=3$.
\end{corollary}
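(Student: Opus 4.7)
The plan is to reduce both statements directly to Theorem \ref{TreeStar} using the classical fact that every forest decomposes into two star forests. One standard justification: root each tree of the forest arbitrarily, and put edges joining a vertex at even depth to its children in one part and edges joining a vertex at odd depth to its children in the other; each part is a disjoint union of stars.

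For the first statement, suppose $\Upsilon(G)\le 2$, so $G=F_1\cup F_2$ for forests $F_1,F_2$. Decompose $F_2=S_1\cup S_2$ into two star forests. Since $F_1\cup S_1$ is the union of a forest and a star forest, Theorem \ref{TreeStar} gives a partition of its edges into two forests whose components have diameter at most $18$. Appending $S_2$ (whose components are stars and thus have diameter at most $2$) yields a partition of $E(G)$ into three forests of diameter at most $18$, so $\Upsilon_{18}(G)\le 3$.

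For the second statement, suppose $\Upsilon(G)\le 3$, so $G=F_1\cup F_2\cup F_3$ for forests $F_1,F_2,F_3$. Again decompose $F_3=S_1\cup S_2$ into two star forests. Applying Theorem \ref{TreeStar} to $F_1\cup S_1$ and then to $F_2\cup S_2$ gives, from each, two forests whose components have diameter at most $18$, for a total of four such forests partitioning $E(G)$. Hence $\Upsilon_{18}(G)\le 4$.

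Finally, for the ``in particular'' clause, the above upper bounds give $\Upsilon_{bd}(\mathcal{A}_2)\le 3$ and $\Upsilon_{bd}(\mathcal{A}_3)\le 4$. The matching lower bounds $\Upsilon_{bd}(\mathcal{A}_k)\ge k+1$ for $k\in\{2,3\}$ were already noted in the introduction: a graph that is the edge-disjoint union of $k$ spanning trees, when partitioned into $k$ forests, must be partitioned into $k$ spanning trees, and there exist such graphs of arbitrarily large diameter. Combining these yields $\Upsilon_{bd}(\mathcal{A}_2)=3$ and $\Upsilon_{bd}(\mathcal{A}_3)=4$, confirming Conjecture \ref{PlusOneConj} for $k\in\{2,3\}$. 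There is essentially no obstacle in this argument once Theorem \ref{TreeStar} is in hand; all the work is concentrated in that theorem.
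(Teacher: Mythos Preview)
Your proof is correct and follows essentially the same approach as the paper: decompose one of the forests into two star forests via the depth-parity argument, then apply Theorem~\ref{TreeStar} to the appropriate pairings. Your write-up is slightly more detailed than the paper's (you justify the star-forest decomposition and spell out the lower bound for the ``in particular'' clause), but the underlying argument is identical.
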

\begin{proof}
If $G$ is the union of two forests $F_1$ and $F_2$, then we decompose the edges of $F_2$ into two star forests $S_1$ and $S_2$ and apply Theorem~\ref{TreeStar} to the union of $F_1$ and $S_1$.

If $G$ is the union of three forests $F_1$, $F_2$ and $F_3$, then we decompose the edges of $F_3$ into two star forests $S_1$ and $S_2$. We apply Theorem~\ref{TreeStar} to the union of $F_1$ and $S_1$, and separately to the union of $F_2$ and $S_2$.
\end{proof}

In Section 3 we investigate the bounded diameter arboricity for planar graphs of a certain girth and show the existence of $\varepsilon$-thin spanning trees in highly edge-connected planar graphs.

\section{Forest plus Star Forest}

In this section we show that every simple graph $G$ which is the union of a forest and a star forest can be decomposed into two forests in which every tree has diameter at most 18. Note that our proof also works if we allow $G$ to be infinite.

An \textit{out tree} is a rooted tree in which every edge is oriented away from the root.
An \textit{out star forest} is a directed forest in which every component is a star and the edges of every star are oriented from the center to the leaves. If a star has size 1, then we arbitrarily choose one of the two vertices as the center and orient the edge away from it.

\begin{definition}\label{def-outing}
An \emph{outing} $G=(S,T)$ is the union of an out star forest $S$ and an out tree $T$. We let $C(S)$ denote the set of centers of the star forest $S$ and $L(S)$ denote the set of leaves of $S$. 
\end{definition}

Given an outing $G$, our goal is to construct a 2-edge-coloring of $G$ such that there are no monochromatic cycles and no long monochromatic paths. Notice that in an outing every vertex has indegree at most 2. The first important property of the coloring we construct is that every vertex has indegree at most 1 in each color. In such a coloring every monochromatic cycle is directed and every monochromatic path is the union of at most two directed paths which we call \emph{dipaths} for brevity.

Ideally we would like to start with an edge-coloring of $S$ in which every star is monochromatic and extend this coloring to all edges of $G$. Unfortunately, this additional constraint is too strong: If a monochromatic star has $d$ leaves which form a path in $T$, then coloring the edges of this path with the alternate color is necessary to avoid monochromatic triangles. Doing so would create a long monochromatic path in $T$. To avoid this problem, we allow some star edges to have a different color. For technical reasons, we encode the coloring of the stars in a 2-coloring of the vertices of $G$. The color of the center vertex is the color assigned to the star, while the color of a leaf shows how the edge is colored. 
Note that vertex-colorings in this section are not necessarily proper.

\begin{definition}\label{def-rebellious}
Let $c$ be a vertex $2$-coloring of an outing $G=(S,T)$. We say that an edge $\overrightarrow{uv} \in E(S)$ is \emph{rebellious} if $c(u)\ne c(v)$. We also call $v\in V(G)$ \emph{rebellious} if it is the head of a rebellious edge.
\end{definition}

We are mainly concerned with colorings where the rebellious vertices behave nicely with respect to $T$ in the following sense.

\begin{definition}\label{def-tame}
Let $c$ be a vertex $2$-coloring of an outing $G=(S,T)$. We say that $c$ is \emph{tame} if for every edge $\overrightarrow{uv}\in E(T)$ where $v$ is rebellious, we have $c(u)\ne c(v)$ and $u$ is not rebellious.
\end{definition}

In particular, it follows that if the 2-coloring is tame then two rebellious vertices are never joined by an edge in $T$. Notice that in a tame 2-coloring it is possible that all edges of a star are rebellious. 

Given a 2-vertex-coloring of an outing $G=(S,T)$, we now define a 2-edge-coloring of $G$ as follows.

\begin{definition}\label{def-ext}
Let $c:V(G)\rightarrow\{1,2\}$ be a vertex $2$-coloring of an outing $G=(S,T)$. The \emph{extension} of $c$, denoted by Ext($c$), is the $2$-edge-coloring $c':E(G)\rightarrow\{1,2\}$ where:

\begin{enumerate}
\item For all edges $\overrightarrow{uv}\in E(S)$, we have $c'(\overrightarrow{uv}) = c(v)$.
\item For all edges $\overrightarrow{uv}\in E(T)$, we have 
\[
c'(\overrightarrow{uv}) =
\begin{cases}
	c(v) & \text{ if } v\in C(S),\, c(u)=c(v) \text{ and } u \text{ is not rebellious,}\\
	3-c(v) & \text{ otherwise}.
\end{cases}
\]
\end{enumerate}
\end{definition}

Notice that in the Ext($c$)-coloring of $G$, every vertex $v\in V(G)$ has indegree at most 1 in each color. This implies that each monochromatic cycle is directed and each monochromatic path is the union of two directed paths.

\begin{definition}\label{def-center}
The \emph{center graph} $\text{Center}(G)$ of an outing $G=(S,T)$ is a directed graph whose vertex set is $C(S)$ and for every $u, v\in C(S)$ with $u\ne v$, there is an edge $\overrightarrow{uv}$ if $\overrightarrow{uv}\in E(T)$ or if there exists a vertex $w\in L(S)$ such that $\overrightarrow{uw}\in E(S)$ and $\overrightarrow{wv}\in E(T)$.
\end{definition}

Each vertex in $\text{Center}(G)$ has indegree at most 1. In particular, each cycle in $\text{Center}(G)$ is directed and each connected component contains at most one cycle.
Given a coloring of the vertices of $G$, this also corresponds to a coloring of Center($G$) in a natural way.

\begin{definition}\label{def-res}
Let $c$ be a vertex $2$-coloring of an outing $G$.
The \emph{center restriction} of $c$, denoted by Res($c$), is the vertex 2-coloring of Center($G$) defined by coloring each vertex $v\in V(\text{Center}(G))$ with color $c(v)$.
\end{definition}

Our first lemma characterizes monochromatic paths in Ext($c$) where the two endvertices of the path are in $C(S)$ and its interior vertices are in $L(S)$. Note that we phrase the lemma only for monochromatic paths in color 1, but the analogous statement holds also for paths in color 2.

\begin{lemma}\label{lemma-path}
Let $c:V(G)\rightarrow\{1,2\}$ be a tame vertex 2-coloring of an outing $G=(S,T)$. Let $P=v_0v_1\ldots v_k$ be a dipath in $G$ whose edges are colored 1 in Ext($c$). Suppose $v_0,v_k\in C(S)$ and $c(v_i)\in L(S)$ for $i\in \{1,\ldots ,k-1\}$. 

If $c(v_0)=c(v_k)$, then $k\leq 2$ and $\overrightarrow{v_0v_k}\in E(\text{Center}(G))$.

If $c(v_0)\neq c(v_k)$, then $k\leq 3$ and $c(v_0)=1$, $c(v_k)=2$.
\end{lemma}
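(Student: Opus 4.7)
The plan is to analyze the constraints on the vertex colors forced along $P$ by the definition of $\mathrm{Ext}(c)$ and by tameness, propagating them from the interior outward. First, I observe that for each index $i$ with $2 \le i \le k-1$, both endpoints of the edge $\overrightarrow{v_{i-1}v_i}$ lie in $L(S)$, so this edge cannot be in $S$ and therefore lies in $T$. Since $v_i \notin C(S)$, the first alternative of Definition~\ref{def-ext}(2) is unavailable, so the edge's $\mathrm{Ext}(c)$-color equals $3 - c(v_i)$, which being $1$ gives $c(v_i) = 2$ for every interior $v_i$ with $i \ge 2$.

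Now consider the last edge $\overrightarrow{v_{k-1}v_k}$; since $v_k \in C(S)$ cannot be a leaf, this edge lies in $T$. If $k \ge 3$, then $v_{k-1} \in L(S)$ and $c(v_{k-1}) = 2$ by the previous step. The ``case~(a)'' branch of Definition~\ref{def-ext}(2) would require $c(v_{k-1}) = c(v_k) = 1$, which is impossible; so we land in the ``otherwise'' branch, yielding $3 - c(v_k) = 1$, i.e.\ $c(v_k) = 2$. For the case~(a) branch to genuinely fail when $c(v_{k-1}) = c(v_k) = 2$, the vertex $v_{k-1}$ must be rebellious. Tameness applied to the $T$-edge $\overrightarrow{v_{k-2}v_{k-1}}$ then gives $c(v_{k-2}) \neq c(v_{k-1}) = 2$, so $c(v_{k-2}) = 1$. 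If $k \ge 4$, the first step forces $c(v_{k-2}) = 2$, a contradiction; hence $k \le 3$.

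For the tight case $k = 3$, the above gives $v_2$ rebellious and $c(v_2) = c(v_3) = 2$, and tameness on $\overrightarrow{v_1 v_2}$ forces $c(v_1) = 1$ with $v_1$ not rebellious. A $T$-edge into $v_1 \in L(S)$ would produce $c(v_1) = 2$, so $\overrightarrow{v_0 v_1}$ must lie in $S$; then the non-rebelliousness of $v_1$ yields $c(v_0) = c(v_1) = 1$, giving $c(v_0) = 1$ and $c(v_k) = 2$ as claimed. For $k = 2$, I would split on whether $\overrightarrow{v_1 v_2}$ satisfies case~(a) of Definition~\ref{def-ext}: if yes, then $c(v_1) = c(v_2) = 1$ and $v_1$ is not rebellious, which forces $\overrightarrow{v_0 v_1} \in S$ (hence $\overrightarrow{v_0 v_2} \in E(\text{Center}(G))$ via $v_1$) and $c(v_0) = c(v_2) = 1$; if no, a brief sub-analysis on whether $\overrightarrow{v_0 v_1}$ lies in $S$ or $T$ and whether $v_1$ is rebellious confirms that either $c(v_0) = 1, c(v_k) = 2$, or $c(v_0) = c(v_k) = 2$ with $\overrightarrow{v_0 v_2} \in E(\text{Center}(G))$ through $v_1$. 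For $k = 1$, both endpoints lie in $C(S)$, so the edge is in $T$ and in $E(\text{Center}(G))$ by Definition~\ref{def-center}, while the two branches of Definition~\ref{def-ext} yield the two possible colorings $c(v_0) = c(v_1) = 1$ and $c(v_0) = 1, c(v_1) = 2$.

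The argument is essentially mechanical bookkeeping driven by Definitions~\ref{def-ext} and~\ref{def-tame}; the main obstacle is keeping the $k = 2$ sub-case analysis tidy, but every sub-case is resolved directly and produces exactly one of the two conclusions of the lemma.
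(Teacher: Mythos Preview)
Your proposal is correct and uses essentially the same ingredients as the paper's proof: both arguments repeatedly apply Definition~\ref{def-ext} to force $c(v_i)=2$ on interior leaves, then invoke tameness at a rebellious vertex to obtain a contradiction when the path is too long. The only difference is organizational---the paper cases first on the pair $(c(v_0),c(v_k))$ and derives a bound on $k$ in each case, whereas you first establish $k\le 3$ uniformly and then treat $k=3,2,1$ separately---but the underlying deductions are the same, and your somewhat abbreviated $k=2$ sub-analysis does check out.
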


\begin{proof}
First note that $\overrightarrow{v_{i}v_{i+1}}\in E(T)$ for $i\in\{1,\ldots ,k-1\}$ since $v_i\in L(S)$ for all such $i$. Now let us suppose $c(v_0)=c(v_k)$ and $k\geq 3$. Since $\overrightarrow{v_{k-2}v_{k-1}}\in E(T)$ and $v_{k-1}\in L(S)$, it follows from the definition of Ext($c$) that the color of $\overrightarrow{v_{k-2}v_{k-1}}$ in Ext($c$) (which is 1) equals $3-c(v_{k-1})$ and hence $c(v_{k-1})=2$.
If $c(v_k)=1$, then the edge $\overrightarrow{v_{k-1}v_k}$ would be colored 2 by the definition of Ext($c$), a contradiction. Thus $c(v_k)=2$ and $v_{k-1}$ is rebellious. Since $c$ is tame, we have $c(v_{k-2})=1$ and $v_{k-2}$ is not rebellious. 
By the definition of Ext($c$), it follows that $\overrightarrow{v_{k-3}v_{k-2}}\in E(S)$. Thus, $v_{k-3}\in C(S)$ and $k=3$. Since $c(v_0)=c(v_k)=2$ and $c(v_1)=1$, we have that $v_1$ is rebellious, a contradiction since $v_{k-2}$ is not rebellious.

Notice that $c(v_0)=c(v_k)$ and $k\leq 2$ implies $\overrightarrow{v_0v_k}\in E(\text{Center}(G))$ unless $k=2$ and $\overrightarrow{v_0v_1}, \overrightarrow{v_1v_2}\in E(T)$. As before, this case implies $c(v_1)=2$, $c(v_2)=2$ and $v_1$ is not rebellious. Since $c$ is tame, it follows that $c(v_0)=1$, contradicting $c(v_0)=c(v_2)$.

Next suppose $c(v_0)=2$ and $c(v_k)=1$. By the definition of Ext($c$), we have $c(v_{k-1})=1$ and $v_{k-1}$ is not rebellious. It follows that $k\geq 2$. Once again, it follows that $\overrightarrow{v_{k-2}v_{k-1}}\in E(S)$. Thus, $v_{k-2}\in C(S)$ and $k=2$. Now $c(v_0)=2$ and $c(v_1)=1$, so $v_1$ is rebellious, a contradiction.

Finally, suppose $c(v_0)=1$, $c(v_k)=2$ and $k>3$. Since the edges $\overrightarrow{v_{k-3}v_{k-2}}$ and $\overrightarrow{v_{k-2}v_{k-1}}$ are in $E(T)$ and colored 1, we have $c(v_{k-2})=c(v_{k-1})=2$. Now $v_{k-1}$ is not rebellious since $c$ is tame, so the edge $\overrightarrow{v_{k-1}v_k}$ received color 2 in Ext($c$), a contradiction.
\end{proof}

Let $c$ be a vertex-coloring (resp. edge-coloring) of a directed graph $G$. We say that $c$ is \textit{acyclic} if there exists no directed cycle in $G$ in which all vertices (resp. edges) have the same color. We want to find a vertex 2-coloring $c$ of $G$ such that Ext($c$) is acyclic. The next lemma shows that this goal is achieved whenever $c$ is tame and the restriction of $c$ is acyclic.

\begin{lemma}~\label{lemma-acyclic}
Let $c:V(G)\rightarrow\{1,2\}$ be a tame vertex 2-coloring of an outing $G=(S,T)$. If Res($c$) is acyclic, then also Ext($c$) is acyclic. 
\end{lemma}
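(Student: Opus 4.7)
The plan is to argue by contradiction: suppose Ext($c$) contains a monochromatic directed cycle $C$, which by symmetry of the two colors we may take to be colored 1. From $C$ I will extract a monochromatic directed cycle in Res($c$), contradicting acyclicity of Res($c$).

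First I would observe that since $T$ is a forest, $C$ cannot consist entirely of $T$-edges; hence $C$ contains at least one $S$-edge, and therefore visits at least one vertex of $C(S)$. Let $v_0,v_1,\ldots,v_{\ell-1}$ be the vertices of $C$ lying in $C(S)$, listed in the cyclic order they are encountered on $C$ (with $v_\ell=v_0$). For each $i$, let $P_i$ denote the subpath of $C$ from $v_i$ to $v_{i+1}$. The intermediate vertices of $P_i$ are by construction not in $C(S)$, and (under the convention that every vertex of $G$ lies in $C(S)\cup L(S)$) therefore lie in $L(S)$. Since $C$ is monochromatic of color 1, every edge of $P_i$ is colored 1 in Ext($c$), so $P_i$ satisfies the hypotheses of Lemma~\ref{lemma-path}.

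The key step is then to apply Lemma~\ref{lemma-path} to each $P_i$. The lemma gives two possibilities: either $c(v_i)=c(v_{i+1})$ and $\overrightarrow{v_iv_{i+1}}\in E(\text{Center}(G))$, or $c(v_i)\neq c(v_{i+1})$, in which case necessarily $c(v_i)=1$ and $c(v_{i+1})=2$. In other words, moving along $C$ through consecutive centers, the only permitted color transition is $1\to 2$; the reverse transition $2\to 1$ never occurs. But the cyclic sequence $c(v_0),c(v_1),\ldots,c(v_{\ell-1}),c(v_0)$ must return to its initial value, so no transition can occur at all. Hence $c(v_i)$ is constant in $i$, and therefore every $\overrightarrow{v_iv_{i+1}}$ lies in $E(\text{Center}(G))$, exhibiting a monochromatic directed cycle $v_0v_1\cdots v_{\ell-1}v_0$ in Center($G$) under Res($c$) — the desired contradiction.

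The only subtle point I anticipate is the hypothesis of Lemma~\ref{lemma-path} that intermediate vertices lie in $L(S)$, which requires that every non-$C(S)$ vertex of $G$ lie in $L(S)$; this is an implicit convention of the paper (vertices of $G$ outside $V(S)$ can be absorbed into $S$ as degenerate one-vertex stars). Once that is in place, the whole argument reduces to observing that the admissible color transitions along $C$ are monotone, which is incompatible with traversing a cycle.
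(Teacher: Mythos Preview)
Your proof is correct and essentially identical to the paper's: both split the monochromatic cycle at its $C(S)$-vertices, apply Lemma~\ref{lemma-path} to each segment to rule out a $2\to 1$ color transition among consecutive centers, and conclude that the centers form a monochromatic cycle in Center($G$). Your remark about the convention $V(G)=C(S)\cup L(S)$ is apt and is also tacitly used in the paper's own argument.
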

\begin{proof}
Suppose not. Let $C$ be a monochromatic cycle in Ext($c$), say in color 1. 
We set $C_C=V(C)\cap C(S)$ and $C_L=V(C)\cap L(S)$. Notice that both $C_C$ and $C_L$ are non-empty since $C$ must contain an edge of $S$ as $T$ is a tree. Let $v_0\in C_C$ and label the remaining vertices in $C_C$ by $v_1,\ldots ,v_n$ as they appear in $C$ starting from $v_0$. 

First let us suppose that not all vertices in $C_C$ are colored the same. 
Then there exists an $i\in\{0,\ldots n\}$ such that $c(v_i)=2$ and $c(v_{i+1})=1$ (indices are considered modulo $n+1$). Now the directed path from $v_i$ to $v_{i+1}$ on $C$ contradicts Lemma~\ref{lemma-path}.
We may thus assume that all vertices in $C_C$ received the same color. By Lemma~\ref{lemma-path}, the paths between $v_i$ and $v_{i+1}$ on $C$ correspond to edges in Center($G$). Thus, the vertices $v_0,\ldots ,v_n$ correspond to a monochromatic cycle in Center($G$), contradicting that Res($c$) is acyclic.
\end{proof}

Now we give an upper bound for the length of a monochromatic dipath in Ext($c$).

\begin{lemma}\label{MaxLength} 
Let $c:V(G)\rightarrow\{1,2\}$ be a tame vertex 2-coloring of an outing $G=(S,T)$ for which Res($c$) is acyclic. Let $d_T$ be the length of a longest vertex-monochromatic dipath in $T$ whose vertices are all in $L(S)$. For $i\in\{1,2\}$, let $d_i$ be the length of a longest monochromatic dipath in Center($G$) whose vertices are colored $i$ in Res($c$). If $P$ is a monochromatic dipath in the Ext($c$)-coloring of $G$, then the length of $P$ is at most
$d_T+2(d_1+d_2)+6\,.$
\end{lemma}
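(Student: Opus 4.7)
The plan is to analyze a monochromatic dipath $P = v_0 v_1 \ldots v_k$ in Ext($c$), assumed without loss of generality to be colored~1. Let $i_1 < i_2 < \ldots < i_m$ be the indices with $v_{i_t} \in C(S)$. If $m = 0$, then every vertex of $P$ lies in $L(S)$, so every edge of $P$ lies in $T$ (as an $S$-edge must have a center as tail); the otherwise clause of Ext($c$) then forces $c(v_j) = 2$ for all $j \geq 1$, so $v_1 v_2 \ldots v_k$ is a color-2 vertex-monochromatic dipath in $T$ inside $L(S)$, giving $k - 1 \leq d_T$.

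Assuming $m \geq 1$, I split $P$ into an initial segment $v_0 \ldots v_{i_1}$, a middle $v_{i_1} \ldots v_{i_m}$, and a final segment $v_{i_m} \ldots v_k$. Applying Lemma~\ref{lemma-path} to each sub-dipath $v_{i_t} \ldots v_{i_{t+1}}$ (whose interior lies in $L(S)$): same-color endpoints give length at most 2 together with an edge of Center($G$), while distinct-color endpoints force $c(v_{i_t}) = 1$, $c(v_{i_{t+1}}) = 2$ with length at most 3. Thus the color sequence at the $v_{i_t}$ consists of a block of 1's followed by a block of 2's, and the same-colored subsequences form vertex-monochromatic dipaths in Center($G$), whose lengths are bounded by $d_1$ and $d_2$ respectively. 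Multiplying by 2 for the length-$\leq 2$ segments and adding at most 3 for any transition segment, the middle has $P$-length at most $2d_1 + 2d_2 + 3$, with the $+3$ present only when a color transition actually occurs.

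For the initial and final segments, every edge lies in $T$ (except possibly $\overrightarrow{v_{i_m} v_{i_m+1}}$ which may be in $S$), and the otherwise clause of Ext($c$) forces $c(v_j) = 2$ at each interior $L(S)$-vertex, producing color-2 dipaths in $T$ inside $L(S)$ bounded in length by $d_T$. The delicate part is cutting $i_1$ and $k - i_m$ down to $d_T + \text{const}$ by analyzing the colors at the boundary centers. If $c(v_{i_1}) = 1$, the first clause of Ext($c$) applied to $\overrightarrow{v_{i_1-1} v_{i_1}}$ forces $c(v_{i_1-1}) = 1$, contradicting the forced value 2 from the otherwise clause as soon as $i_1 \geq 2$; hence $i_1 \leq 1$. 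If instead $c(v_{i_1}) = 2$, then for $\overrightarrow{v_{i_1-1} v_{i_1}}$ to receive color 1 the vertex $v_{i_1-1}$ must be rebellious; invoking the tame condition on $\overrightarrow{v_{i_1-2} v_{i_1-1}}$ then forces $c(v_{i_1-2}) = 1$, again clashing with the otherwise-clause value when $i_1 \geq 3$, so $i_1 \leq 2$. A symmetric analysis (splitting on whether $\overrightarrow{v_{i_m} v_{i_m+1}}$ is in $T$ or $S$) yields $k - i_m \leq d_T + 2$ regardless of $c(v_{i_m})$.

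Summing across cases, the transition subcase achieves the bound exactly: $k \leq 1 + (2d_1 + 2d_2 + 3) + (d_T + 2) = d_T + 2(d_1 + d_2) + 6$; the other subcases (all $v_{i_t}$ colored 1, all colored 2, or $m = 0$) give strictly smaller totals. I expect the main obstacle to be the all-color-2 subcase: a naive analysis would allow $d_T + 2$ on both ends for a total of about $2d_T + 2d_2 + 4$, exceeding the claimed bound in general; the rebellious/tameness chain described above is essential to trim the initial segment to length $\leq 2$ and close the gap.
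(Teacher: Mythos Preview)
Your proposal is correct and follows essentially the same approach as the paper: split $P$ at its centers, use Lemma~\ref{lemma-path} to bound the middle by $2(d_1+d_2)+3$, bound the tail by $d_T+2$, and show the head has length at most $2$ (at most $1$ when $c(v_{i_1})=1$). The only organizational difference is that you case-split on $c(v_{i_1})$ while the paper case-splits on the length of the initial segment and then deduces $c(v_0)$; you also handle the $m=0$ case explicitly, which the paper leaves implicit.
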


\begin{proof}
By Lemma~\ref{lemma-acyclic}, we know that Ext($c$) is acyclic. We may assume that the edges of $P$ are all colored 1.  
Let $v_0,v_1\ldots, v_n$ denote the vertices in $V(P)\cap C(S)$, labelled in the order they appear on $P$.
By Lemma~\ref{lemma-path} there exists no $i\in \{0,\ldots ,n-1\}$ with $c(v_{i})=2$ and $c(v_{i+1})=1$. Thus, there exists $k\in\{0,\ldots ,n+1\}$ such that $c(v_i)=1$ if and only if $i<k$. Notice that by Lemma~\ref{lemma-path}, the vertices $v_0v_1\ldots v_{k-1}$ correspond to a monochromatic path of color 1 and length $k-1$ in Center($G$), while the vertices $v_kv_{k+1}\ldots v_{n}$ correspond to a monochromatic path of color 2 and length $n-k$. By definition of $d_1$ and $d_2$ we have $k-1\leq d_1$ and $n-k\leq d_2$.
By Lemma~\ref{lemma-path}, there are at most 3 edges on $P$ between $v_{k-1}$ and $v_k$, and at most 2 edges between $v_{i-1}$ and $v_i$ for every $i\in\{1,\ldots ,n\}\setminus\{k\}$. Thus, the number of edges on $P$ between $v_0$ and $v_n$ is at most $2(k-1) + 3 + 2(n-k) \leq 2(d_1+d_2)+3$. 

Let $w_0, \ldots ,w_{n'}$ denote the vertices encountered on $P$ after $v_n$. Then $w_i\in L(S)$ for $i\in \{0,\ldots ,n'\}$ and $\overrightarrow{w_iw_{i+1}}\in E(T)$ for $i\in \{0,\ldots ,n'-1\}$. Since the edges of $P$ are all colored 1, we have $c(w_i)=2$ for $i\in \{1,\ldots ,n'\}$. Thus $n'-1\leq d_T$, and there are at most $d_T+2$ edges on $P$ after $v_n$.

Suppose there are at least 3 edges on $P$ before $v_0$, say $\overrightarrow{u_0u_1}$, $\overrightarrow{u_1u_2}$, and $\overrightarrow{u_2v_0}$. Then all these three edges must be in $T$ and $c(u_1)=c(u_2)=2$. Thus, $u_2$ is not rebellious, and no matter what the the color of $v_0$ is, the edge $\overrightarrow{u_2v_0}$ is colored 2 in Ext($c$), a contradiction. Suppose there are two edges $\overrightarrow{u_1u_2}$ and $\overrightarrow{u_2v_0}$ before $v_0$. Then $c(u_2)=2$ and since the edge $\overrightarrow{u_2v_0}$ is colored 1, it follows that $c(v_0)=2$. In this case there are at most $2d_2$ edges between $v_0$ and $v_n$, so the length of $P$ is at most $2+2d_2+d_T+2<d_T+2(d_1+d_2)+6$. Finally, suppose there is at most one edge preceding $v_0$ in $P$. Then the length of $P$ is at most $1+2(d_1+d_2)+3 + d_T+2= d_T+2(d_1+d_2)+6$. 
\end{proof}

Finally, all that is left to show is that there exists a vertex 2-coloring of $G$ satisfying the conditions of Lemma~\ref{MaxLength}. 

\begin{lemma}\label{ExTame}
Let $G=(S,T)$ be an outing. There exists a tame vertex 2-coloring $c$ of $G$ such that
color class 1 of Res($c$) forms an independent set in Center($G$),
color class 2 of Res($c$) induces no directed path of length 2 in Center($G$), and
there is no vertex-monochromatic dipath of length 2 in $T$ whose vertices are all in $L(S)$.
\end{lemma}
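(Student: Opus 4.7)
The plan is to build $c$ in two stages: first color $C(S)$ via a suitable coloring of $\text{Center}(G)$, then extend to $L(S)$ by a BFS on $T$ that respects tameness. The key structural fact to exploit in the first stage is that every vertex of $\text{Center}(G)$ has indegree at most $1$, so each component is either an out-tree from a unique source or an out-tree rooted on a single directed cycle.

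For Stage~1 I would color each acyclic component of $\text{Center}(G)$ by the parity of the distance from its source (so no monochromatic edges arise at all). For a cyclic component with an even cycle $v_1\to\cdots\to v_m\to v_1$ I would alternate $1,2,\ldots$ around the cycle and extend the alternation outward into the hanging out-trees. For an odd cycle I would color the cycle $1,2,1,2,\ldots,1,2,2$, introducing a single 2-2 ``seam'' $v_{m-1}v_m$. Since $v_{m-2}$ and every out-neighbor of $v_m$ (namely $v_1$ and any tree children of $v_m$) then receive color $1$, the seam is flanked by color-$1$ vertices, so color $1$ remains independent and the seam is not part of any directed color-$2$ path of length $2$.

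For Stage~2 I would process $V(T)$ in BFS order from each $T$-root, with Stage~1's colors on $C(S)$ already fixed. For each $v\in L(S)$ with star center $u'$ and default color $\gamma:=c(u')$, I would apply the first applicable rule: \textbf{(R1)} if the $T$-parent $p(v)$ is rebellious, set $c(v)=\gamma$; \textbf{(R2)} else if $p(v)$ exists and $c(p(v))\ne\gamma$, set $c(v)=\gamma$; \textbf{(R3)} else if $p(v),p(p(v))\in L(S)$ and $c(p(p(v)))=\gamma$, set $c(v)=3-\gamma$ (a preemptive rebellion); \textbf{(R4)} otherwise set $c(v)=\gamma$. Rules (R1)--(R2) are precisely those forced by tameness, (R3) breaks the only way a $T$-dipath of length $2$ within $L(S)$ could become monochromatic by default, and (R4) is the safe choice in all remaining cases.

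Tameness is immediate because $v$ rebels only under (R3), where $c(p(v))=\gamma\ne 3-\gamma=c(v)$ and $p(v)$ is non-rebellious by hypothesis. To verify condition (3), I would examine any $T$-dipath $v_1\to v_2\to v_3$ with all $v_i\in L(S)$ and split on the rule that applied to $v_3$: under (R2), (R3), or (R4) it is immediate that the three colors cannot all agree. The delicate case is (R1), where $v_2$ itself must have been rebelled under (R3); this fixes $c(v_2)=3-c(u'(v_2))$ while the (R3)-trigger at $v_2$ had forced $c(v_1)=c(u'(v_2))$, whence $c(v_1)\ne c(v_2)$. The main technical obstacles will be (i) handling the odd-cycle case of $\text{Center}(G)$ in Stage~1 without creating a directed color-$2$ path of length $2$, and (ii) showing, in Stage~2, that cascading rebellions never obstruct condition (3); the second resolves only because (R3) triggers with precisely the structure that subsequent (R1)-cases will later invoke.
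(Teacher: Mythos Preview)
Your argument is correct and follows the same two-stage architecture as the paper's proof: color $C(S)$ via a near-proper $2$-coloring of $\text{Center}(G)$ (with a single color-$2$ seam on each odd cycle), then extend to $L(S)$ by processing $T$ in BFS order. The difference lies in Stage~2. The paper takes the opposite default: set $c(v)=3-c(p(v))$ unless $p(v)$ is rebellious with $c(p(v))=c(u'(v))$, in which case $c(v)=c(p(v))$. With that single rule, condition~(3) is immediate---any $T$-edge $\overrightarrow{uv}$ with $u,v\in L(S)$ and $c(u)=c(v)$ forces $u$ rebellious and $v$ not, so two such edges cannot chain. Your default ``match the star center'' necessitates the preemptive rebellion rule~(R3) and the four-case verification you sketch, whereas the paper's default ``alternate along $T$'' makes monochromatic $T$-edges within $L(S)$ the exception rather than the norm, so condition~(3) takes care of itself. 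Both routes work; the paper's is shorter.
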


\begin{proof}
We start by coloring the vertices in $C(S)$. If a component of Center($G$) is bipartite, then we choose a proper 2-coloring of its vertices. If a component is not bipartite, then it contains precisely one cycle and this cycle has odd length. In this case we delete an edge $uv$ of that cycle and properly 2-color the resulting tree so that $c(u)=2$.  Now the two color classes of Res($c$) are as desired. 

We now extend this coloring to the vertices in $L(S)$. If the root of $T$ is in $L(S)$, color it arbitrarily. Let $v$ be a vertex at distance $i$ from the root in $T$ and suppose all vertices at distance $i-1$ from the root are already colored. Let $u$ be the parent of $v$ in $T$ and let $w$ be such that $\overrightarrow{wv} \in E(S)$. We set $c(v)=3-c(u)$ unless $u$ is rebellious and $c(u)=c(w)$, in which case we set $c(v)=c(u)$. Notice that if $c(v)=c(u)$, then $v$ is not rebellious. Thus if $c(v)\neq c(u)$ and $v$ is rebellious, then $c(u)=c(w)$; in which case $u$ is not rebellious given how we set the color of $v$. This implies that the resulting coloring $c$ is tame. Furthermore, if $\overrightarrow{uv}$ is an edge with $u,v\in L(S)$ and $c(u)=c(v)$, then $u$ is rebellious while $v$ is not rebellious. It follows immediately that there are no vertex-monochromatic dipaths of length 2 in $T$ whose vertices are in $L(S)$.
\end{proof}

Now Theorem~\ref{TreeStar} follows easily.

\begin{proof}[Proof of Theorem~\ref{TreeStar}]
Let $G$ be the union of a forest and a star forest. Now let $G'=(S,T)$ be an outing such that the underlying undirected graph of $G'$ contains $G$ as a subgraph.
Let $c$ be a tame vertex 2-coloring of $G'$ as given by Lemma~\ref{ExTame}. Let $H'$ be a monochromatic connected subgraph of $G'$ and let $H$ be the underlying undirected graph of $H'$. 

Suppose $H$ contains a cycle $C$. Since the indegree of every vertex in $H'$ is at most one, the cycle $C$ is directed in $H'$. By Lemma~\ref{lemma-acyclic}, there are no monochromatic directed cycles in Ext($c$), a contradiction. So we may assume that $H$ is a tree.
By Lemma~\ref{MaxLength}, the length of a monochromatic dipath in Ext($c$) is at most $1+2\cdot (0+1)+6=9$. Thus, every dipath in $H'$ has length at most 9. Since the indegree of every vertex in $H'$ is at most one, every path in $H$ is the union of at most two dipaths in $H'$. Thus, the diameter of $H$ is at most 18. Hence, $c$ induces a 2-edge-coloring of $G$ in which every connected monochromatic subgraph is a tree with diameter at most 18.
\end{proof}

\section{Planar graphs and $\varepsilon$-thin spanning trees}

All graphs in this section are finite and planar. We denote the dual of a planar graph $G$ by $G^*$.
Given a graph $G$ and a set of vertices $A\subseteq V(G)$, we denote by $\sigma _G(A)$ the set of edges of the form $\{ab \in E(G):a\in A,b\notin A\}$. We call $\sigma_G(A)$ the \emph{boundary} of $A$ in $G$.

\begin{definition}
Let $\varepsilon$ be a real number with $0<\varepsilon <1$. We say a spanning subgraph $H$ of a graph $G$ is \textit{$\varepsilon$-thin} if for every $A\subseteq V(G)$ we have $|\sigma_H(A)| \leq \varepsilon |\sigma_G (A)|$.
\end{definition}

Of particular interest is the existence of $\varepsilon$-thin spanning trees. Goddyn~\cite{Goddyn} conjectured that for every $\varepsilon$ with $0<\varepsilon <1$ there exists a number $f(\varepsilon )$ such that every $f(\varepsilon )$-edge-connected graph contains an $\varepsilon$-thin spanning tree. This would imply the $(2+\varepsilon )$-flow conjecture by Goddyn and Seymour, which was recently proved by Thomassen~\cite{Thomassen}.

Thomassen observed that there exists no real number $\varepsilon$ with $0<\varepsilon < 1$ such that every 4-edge-connected planar graph contains an $\varepsilon$-thin spanning tree (personal communication). Here we give a short proof inspired by his argument.
\begin{theorem}
For every real number $\varepsilon$ with $0<\varepsilon < 1$ there exists a planar 4-edge-connected graph with no $\varepsilon$-thin spanning tree.
\end{theorem}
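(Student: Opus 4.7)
Plan. For a fixed $\varepsilon\in(0,1)$ I plan to construct a planar $4$-edge-connected graph $G_\varepsilon$ with no $\varepsilon$-thin spanning tree; $|V(G_\varepsilon)|$ will grow as $\varepsilon\to 1$.

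The first ingredient is a degree-sum obstruction coming from the singleton cuts $A=\{v\}$. For any spanning tree $T$ of $G$ we have $\sum_{v\in V(G)}\deg_T(v)=2(|V(G)|-1)$; the thin condition applied to each singleton gives $\deg_T(v)=|\sigma_T(\{v\})|\le\varepsilon\,|\sigma_G(\{v\})|=\varepsilon\deg_G(v)$, so summing yields $\varepsilon\ge (|V(G)|-1)/|E(G)|$. Taking $G_\varepsilon$ to be a planar $4$-regular $4$-edge-connected graph on $N$ vertices, e.g.\ a sufficiently large antiprism, this forces $\varepsilon\ge(N-1)/(2N)$, which handles every $\varepsilon<1/2$.

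To push past $1/2$ I would exploit non-singleton cuts. For any $A\subseteq V(G)$ we have the component-count bound $|\sigma_T(A)|\ge c_A+c_{V\setminus A}-1$, where $c_A$ and $c_{V\setminus A}$ count the components of $G[A]$ and $G[V\setminus A]$; combined with $|\sigma_G(A)|\ge 4\max(c_A,c_{V\setminus A})$ from $4$-edge-connectivity, this gives the forced ratio $(2k-1)/(4k)\to 1/2$ when the two sides are balanced with $k$ components each. One way to realise such a cut is to start from a planar $4$-regular bipartite multigraph on $2k$ nodes and ``blow up'' each node into a small $4$-edge-connected planar piece, giving a second proof for $\varepsilon<1/2$ that does not rely on $4$-regularity.

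The main obstacle is the $1/2$ ceiling of the component-count bound, which is forced by $|\sigma_G(A)|\ge 4\max(c_A,c_{V\setminus A})\ge 2(c_A+c_{V\setminus A})$. To cover $\varepsilon\in[1/2,1)$ I plan to invoke planar duality: $T$ is $\varepsilon$-thin in $G$ iff the dual spanning tree $T^*$ of $G^*$ (a planar graph of girth at least $4$) satisfies $|T^*\cap C|\ge(1-\varepsilon)|C|$ for every cycle $C$ of $G^*$, since every cut $\sigma_G(A)$ of $G$ is a disjoint union of cycles of $G^*$. It therefore suffices to construct a planar graph $G^*$ of girth at least $4$ in which \emph{every} spanning tree misses more than $\varepsilon|C|$ edges of some cycle $C$, and then take $G_\varepsilon$ to be its planar dual. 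My intended construction takes $C$ to be a cycle embedded in a much denser surrounding planar subgraph, engineered so that every spanning tree of $G^*$ is forced to cover the surrounding structure rather than $C$ and hence must omit most edges of $C$; verifying this for every spanning tree, and making the forced miss ratio tend to $1$ as the surrounding structure grows, is the delicate step that remains to be worked out.
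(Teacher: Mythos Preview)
Your argument for $\varepsilon<1/2$ via singleton cuts is correct and elementary, but the proposal is incomplete: the heart of the theorem is the range $\varepsilon\in[1/2,1)$, and there you explicitly leave the construction of $G^*$ as ``the delicate step that remains to be worked out.'' The dual reformulation is valid, but it only restates what must be proved; you still need, for each $\varepsilon$ close to $1$, an explicit planar girth-$4$ graph in which \emph{every} spanning tree misses an $\varepsilon$-fraction of some cycle, and you give no construction or argument for this. Your component-count inequality $|\sigma_T(A)|\ge c_A+c_{V\setminus A}-1$ is, as you observe, capped at ratio $1/2$ by $4$-edge-connectivity, so it cannot be pushed further; a genuinely new idea is required.

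The paper supplies that idea with a single construction that works for all $\varepsilon$ at once. One takes (a patched-up version of) the cylinder $P_{4k}\,\square\,C_{4k}$, made $4$-regular and $4$-edge-connected, and uses its \emph{isoperimetric} property: every vertex set $A$ with $k^2\le|A|\le|V|-k^2$ has $|\sigma(A)|\ge k$. The key trick is that in a $4$-regular graph on $n$ vertices, removing any spanning tree $T$ and then a second spanning tree $T'$ of the (necessarily connected) complement leaves exactly $2n-2(n-1)=2$ edges. A balanced edge $e$ of $T'$ (available since $\Delta(T')\le 4$) gives a bipartition $(A,B)$ with both sides of size at least $k^2$; then $|\sigma(A)|\ge k$, but $\sigma(A)$ contains exactly one edge of $T'$ and at most two leftover edges, so at least $|\sigma(A)|-3$ edges of $\sigma(A)$ lie in $T$. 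Hence $T$ occupies at least a $(k-3)/k>\varepsilon$ fraction of this cut, contradicting thinness. The point you are missing is this combination of an expander-like planar graph with the ``two spanning trees plus two leftover edges'' exhaustion forced by $4$-regularity: rather than a lower bound on $|\sigma_T(A)|$ valid for all $T$ simultaneously, the paper manufactures, for each given $T$, a specific large cut on which $T$ is forced to be dense.
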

\begin{proof}
We fix $\varepsilon$ and set $k>\max \{\lceil \frac{3}{1-\varepsilon}\rceil ,1000\}$. Let $G$ be the cartesian product of a path of length $4k$ and a cycle of length $4k$. The graph $G$ is planar but not 4-edge-connected since there exist $8k$ vertices of degree 3 which lie on two faces each containing $4k$ vertices of degree 3. We add new vertices inside these faces and join each new vertex to 4 vertices of degree 3 so that the resulting graph is planar, 4-regular and 4-edge-connected. Moreover, it is easy to see that the resulting graph $G'$ has the property that every sufficiently large set of vertices has a large neighborhood. We leave the verification of the following statement to the reader: For every $A\subseteq V(G')$ with $k^2\leq |A|\leq |V(G')|-k^2$, we have $|\sigma_{G'}(A)|\geq k$. 

Suppose for a contradiction that $G'$ has an $\varepsilon$-thin spanning tree $T$. Since $T$ is $\varepsilon$-thin, the graph $G'-E(T)$ is connected. Let $T'$ be a spanning tree of $G'-E(T)$. Since $G'$ is 4-regular, we have $|E(G'-E(T)-E(T'))|=2n-2(n-1)=2$. Let $e$ be an edge of $T'$ such that $T'-e$ has two connected components $A$ and $B$ each having size at least $k^2$ (such an edge exists since the maximum degree of $T'$ is 4). Thus, $|\sigma_{G'}(A)|\geq k$, but only one of the edges in $\sigma_{G'}(A)$ is contained in $T'$. Since there exist only two edges in $G'$ outside of $T$ and $T'$, the proportion of $\sigma_{G'}(A)$ contained in $T$ is at least 
$$\frac{|\sigma_{G'}(A)|-3}{|\sigma_{G'}(A)|} \geq \frac{k-3}{k} = 1- \frac{3}{k} > \varepsilon \,,$$
contradicting $T$ being $\varepsilon$-thin.
\end{proof}

The following lemma shows that bounded diameter arboricity of planar graphs is related to the existence of $\varepsilon$-thin spanning trees. 

\begin{lemma}\label{lemma-dual}
If $G$ is a planar graph with $\Upsilon_d(G)= 2$, then $G^*$ contains two edge-disjoint $\frac{d}{d+1}$-thin spanning trees.
\end{lemma}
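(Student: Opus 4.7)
The plan is to use planar duality to transfer a diameter-$d$ forest decomposition of $G$ into two edge-disjoint spanning trees of $G^*$, and then verify thinness via a combinatorial bound on how much a single cycle of $G$ can overlap a forest whose components have small diameter. By hypothesis we may write $E(G)=E(F_1)\sqcup E(F_2)$, where each $F_i$ is a forest whose components have diameter at most $d$; we may also assume $G$ is connected. For $i\in\{1,2\}$ I would extend $F_i$ to a spanning tree $T_i$ of $G$ by adding suitable edges of $F_{3-i}$. Standard planar duality then says that $S_i:=\{e^* : e\in E(G)\setminus E(T_i)\}$ is a spanning tree of $G^*$. Since $E(G)\setminus E(T_1)\subseteq E(F_2)$ and $E(G)\setminus E(T_2)\subseteq E(F_1)$ while $F_1,F_2$ are edge-disjoint, the spanning trees $S_1$ and $S_2$ are automatically edge-disjoint in $G^*$.

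To verify the thinness of $S_i$, fix $A\subseteq V(G^*)$. The cut $\sigma_{G^*}(A)$ corresponds under duality to a subset of $E(G)$ with even degree at every vertex, hence to an edge-disjoint union of cycles $C_1,\ldots,C_p$ of $G$. Under the same correspondence $\sigma_{S_i}(A)$ pulls back to $\bigsqcup_j(C_j\setminus E(T_i))$, so both sides of the thinness inequality split additively over $C_1,\ldots,C_p$. It therefore suffices to prove that for every cycle $C$ of $G$ and each $i\in\{1,2\}$,
\[
|C\cap E(T_i)|\ \ge\ \tfrac{1}{d+1}|C|.
\]
Because $T_i\supseteq F_i$ and $C=(C\cap E(F_1))\sqcup(C\cap E(F_2))$, this reduces (with $F=F_{3-i}$) to the single combinatorial estimate: for every forest $F\subseteq G$ whose components have diameter at most $d$ and every cycle $C$ of $G$,
\[
|C\cap E(F)|\ \le\ \tfrac{d}{d+1}|C|.
\]

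For the estimate, the edges of $C$ that lie in $F$ partition into $k$ maximal runs, each of which is a subpath of $C$ contained in a single component of $F$ and therefore of length at most $d$. If $k\ge 1$, then between consecutive runs on $C$ there must be at least one edge outside $F$ (otherwise $F$ would contain the cycle $C$), so $|C|\ge|C\cap E(F)|+k$. Combined with $|C\cap E(F)|\le kd$, this gives $|C|\ge\tfrac{d+1}{d}|C\cap E(F)|$; the case $k=0$ is trivial. The only really delicate point of the whole argument is the standard but easy-to-garble identification of cuts in $G^*$ with elements of the cycle space of $G$; once that correspondence is set up cleanly, the rest is a short counting argument.
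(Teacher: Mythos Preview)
Your argument is correct and essentially matches the paper's. Both proofs rest on the same two ingredients: the cycle--cut duality (a cut $\sigma_{G^*}(A)$ pulls back to an edge-disjoint union of cycles in $G$) and the combinatorial estimate that on any cycle $C$ of $G$ each of the two forests meets $C$ in at least $\tfrac{1}{d+1}|C|$ edges. The only cosmetic difference is in how the spanning trees of $G^*$ are produced: the paper transfers the $2$-edge-coloring directly to $G^*$, observes that each color class is connected, spanning, and $\tfrac{d}{d+1}$-thin, and then picks a spanning tree inside each; you instead extend each $F_i$ to a spanning tree $T_i$ of $G$ and take the dual complement $S_i$, which is automatically a spanning tree of $G^*$. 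Your route has the small advantage of sidestepping the separate check that each color class of $G^*$ is connected.
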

\begin{proof}
Since $\Upsilon_d(G)= 2$, we can edge-color $G$, say in colors 1 and 2, so that there are no monochromatic cycles and every monochromatic path has length at most $d$. By the usual bijection $E(G)\rightarrow E(G^*)$, this gives a 2-edge-coloring of $G^*$. Consider a set $A\subseteq V(G^*)$. The edges in $\sigma_{G^*}(A)$ correspond to an edge-disjoint union of cycles in $G$. Consider one such cycle $C$ in the union. Since there are no monochromatic cycles in $G$, both colors appear in $C$. Moreover, since every path of length at least $d+1$ contains an edge in color 1, at least $\frac{1}{d+1}|E(C)|$ edges of $C$ are colored 1. Thus, at most $\frac{d}{d+1}|\sigma_{G^*}(A)|$ edges of $\sigma_{G^*}(A)$ are colored 2. Since $\sigma_{G^*}(A)$ also contains at least $\frac{1}{d+1}|\sigma_{G^*}(A)|$ edges in color 2, the subgraph colored 2 is both spanning and $\frac{d}{d+1}$-thin. The same holds for the subgraph in color 1. Since subgraphs of $\varepsilon$-thin graphs are again $\varepsilon$-thin, we can choose one spanning tree of $G^*$ in each color to finish the proof.
\end{proof}

We should note that planar graphs of various girths have received much attention for star arboricity (their arboricity is at most 3 for all planar graphs, and at most 2 for triangle-free planar graphs by Euler's formula). Thus we wondered what the bounded diameter arboricity of planar graphs of various girths was. Upon studying the problem, we began to conjecture that planar graphs have bounded diameter arboricity at most 4; similarly, we conjectured that planar triangle-free graphs have bounded diameter arboricity at most 3. Indeed, this is what led us to Conjecture~\ref{PlusOneConj}. Theorem~\ref{TreeStar} has allowed us to prove these conjectures. To see that the bounded diameter arboricity of these classes is greater than the usual arboricity, we use the following lemma.

\begin{lemma}\label{lemma-lowerbound}
Let $\mathcal{G}\subseteq \mathcal{A}_k$ be a family of graphs and $c$ a natural number. If there exists a sequence of graphs $G_1,G_2,\ldots $ in $\mathcal{G}$ such that the diameter of $G_i$ is at least $i$ and $|E(G_i)|\geq k|V(G_i)|-c$ for all $i$, then $\Upsilon_{bd}(\mathcal{G}) \geq k+1$.
\end{lemma}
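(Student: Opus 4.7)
The plan is to argue by contradiction. Suppose $\Upsilon_{bd}(\mathcal{G}) \leq k$, so there exists a natural number $d$ such that $\Upsilon_d(G) \leq k$ for every $G \in \mathcal{G}$. The goal is to show that under this assumption every $G_i$ in the sequence has diameter at most $cd$, contradicting $\mathrm{diam}(G_i) \geq i$ once $i > cd$.

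First I would use the lower bound on $|E(G_i)|$ to control the number of forest components. For each $i$, fix a decomposition of $E(G_i)$ into $k$ forests $F_1, \dots, F_k$ whose connected components all have diameter at most $d$. Since $|E(F_j)| = |V(G_i)| - \mathrm{comp}(F_j)$, summation gives
$$|E(G_i)| = k|V(G_i)| - \sum_{j=1}^k \mathrm{comp}(F_j),$$
so together with $|E(G_i)| \geq k|V(G_i)| - c$ we obtain $\sum_{j=1}^{k} \mathrm{comp}(F_j) \leq c$. That is, across all $k$ forests there are at most $c$ components in total, a bound independent of $i$.

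Next I would bound the diameter via a component-expansion (BFS-like) argument. Fix any $u \in V(G_i)$, set $V_0 = \{u\}$, and for $t \geq 0$ let
$$V_{t+1} = \bigcup \{ C : C \text{ is a component of some } F_j \text{ with } C \cap V_t \neq \emptyset \}.$$
By induction, every vertex of $V_t$ lies within distance $td$ of $u$ in $G_i$: any $v \in V_{t+1}$ shares a component $C$ with some $w \in V_t$, and $\mathrm{diam}(C) \leq d$, so $\mathrm{dist}_{G_i}(u,v) \leq td + d$. On the other hand, whenever $V_t \subsetneq V(G_i)$, the connectedness of $G_i$ (which follows from $\mathrm{diam}(G_i) \geq i \geq 1$) yields an edge from $V_t$ to its complement; the $F_j$-component containing that edge lies inside $V_{t+1}$ but not inside $V_t$. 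Hence the number of components contained fully in $V_t$ strictly increases at each expansion step. Since this count is capped by $c$, the process terminates after at most $c$ steps with $V_c = V(G_i)$, so every vertex is within distance $cd$ of $u$, giving $\mathrm{diam}(G_i) \leq cd$.

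Taking $i > cd$ then contradicts $\mathrm{diam}(G_i) \geq i$, so $\Upsilon_{bd}(\mathcal{G}) \geq k+1$. The one subtle point is the strict-monotonicity step: one has to check that a \emph{genuinely new} component is absorbed each time $V_t$ fails to exhaust $V(G_i)$, which rests on the fact that whole forest components (rather than individual vertices) are swallowed in each expansion. Once that observation is in hand, the diameter bound $cd$ follows, and the rest of the argument reduces to choosing $i$ large enough.
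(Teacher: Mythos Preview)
Your proof is correct and shares the same opening move as the paper---the counting identity $\sum_j \mathrm{comp}(F_j) \leq c$---but from there you take a different route. The paper fixes the single graph $H = G_{cd+1}$, takes a shortest path $P$ of length at least $cd+1$, and applies pigeonhole: the at most $c$ trees in the decomposition cover the $\geq cd+1$ edges of $P$, so some tree $T$ contains at least $d+1$ of them; since $P$ is a shortest path, the extreme endpoints of those edges are at distance $\geq d+1$ in $H$ and hence in $T$, contradicting $\mathrm{diam}(T) \leq d$. Your BFS-expansion instead bounds the diameter of every $G_i$ directly by $cd$. Both are elementary; the paper's pigeonhole argument is slightly shorter, while yours yields an explicit diameter bound for all $G_i$ rather than a contradiction at one chosen index. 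One small remark: from ``every vertex is within distance $cd$ of $u$'' for a \emph{single} $u$ you only get eccentricity $\leq cd$; to conclude $\mathrm{diam}(G_i) \leq cd$ you are tacitly using that the expansion argument applies to every choice of $u$, which is true but worth stating explicitly (otherwise the bound would be $2cd$, which of course still suffices).
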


\begin{proof}
Suppose $\Upsilon_{bd}(\mathcal{G}) \leq k$, then there exists a natural number $d$ such that $\Upsilon_{d}(G)\leq k$ for all $G\in \mathcal{G}$. Consider the graph
$H=G_{cd+1}$. Let $\mathcal{F}=\{F_1,\ldots ,F_k\}$ be a decomposition of $H$ into $k$ forests in which each tree has diameter at most $d$. For $i\in \{1,\ldots ,k\}$, let $\mathcal{T}_i$ denote the connected components of $F_i$ (if a vertex of $H$ is not contained in $F_i$ then we include it in $\mathcal{T}_i$ as an isolated vertex). Now $\mathcal{T} = \bigcup_{i=1}^k \mathcal{T}_i$ is a collection of trees decomposing $H$, each having diameter at most $d$.
Notice that
$$k|V(H)|-c \leq |E(H)|= \sum_{i=1}^k |E(F_i)| = \sum_{i=1}^k |V(H)|-|\mathcal{T}_i| \leq k|V(H)| - |\mathcal{T}|\,,$$
so $|\mathcal{T}|\leq c$. Since the diameter of $H$ is at least $cd+1$, there exists a path $P$ of length at least $cd+1$ in $H$ such that $P$ is a shortest path between its endpoints. Since $P$ contains $cd+1$ edges and every edge is contained in a tree of $\mathcal{T}$, there exists a tree $T$ in $\mathcal{T}$ containing at least $d+1$ edges of $P$. However, since $P$ is a shortest path, this implies that the diameter of $T$ is greater than $d$, contradicting our choice of $\mathcal{F}$.
\end{proof}

For planar graphs of higher girth, we were led to conjecture that planar graphs of girth at least 5 have bounded diameter arboricity at most 2. We were only able to prove this for girth at least 6 and only then by using the result of Kim et al.~\cite{NDT2} that a planar graph of girth at least 6 can be decomposed into a forest and a matching.

\begin{thm}\label{thm-planar}
If we let $\P_g$ denote the class of planar graphs of girth at least $g$, then

\begin{itemize}
\item $\Upsilon_{bd}(\P_3) = 4$,
\item $\Upsilon_{bd}(\P_4) = 3$,
\item $\Upsilon_{bd}(\P_g) = 2$ for all $g\ge 6$.
\end{itemize}
\end{thm}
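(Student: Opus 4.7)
The plan is to match each claimed value with matching upper and lower bounds, reading the upper bounds off from the machinery in Sections~1--2 and the lower bounds from Lemma~\ref{lemma-lowerbound}. For the upper bounds, Euler's formula gives $|E(H)| \le 3|V(H)| - 6$ for every subgraph $H$ of a planar graph, so every $G \in \P_3$ has $\Upsilon(G) \le 3$ by Nash-Williams, hence $\Upsilon_{bd}(\P_3) \le 4$ by Corollary~\ref{2}; the analogous reasoning with the improved bound $|E(H)| \le 2|V(H)| - 4$ for triangle-free planar graphs yields $\Upsilon(G) \le 2$ and hence $\Upsilon_{bd}(\P_4) \le 3$. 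For $g \ge 6$ I invoke the theorem of Kim et al.~\cite{NDT2} that every planar graph of girth at least $6$ decomposes into a forest and a matching; since a matching is a star forest, Theorem~\ref{TreeStar} yields $\Upsilon_{18}(G) \le 2$, and $\P_g \subseteq \P_6$ gives $\Upsilon_{bd}(\P_g) \le 2$ for all $g \ge 6$.

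For the lower bounds, Lemma~\ref{lemma-lowerbound} reduces the task to exhibiting, in each class, a sequence of graphs of unbounded diameter whose edge counts stay within an absolute constant of the Euler bound. The cycles $C_i$ (for $i \ge g$) handle all $g \ge 6$ with $k = 1$ and $c = 0$. For $\P_4$ I would take the Cartesian product $C_4 \mathbin{\square} P_n$: realised by drawing $n$ nested squares joined radially, it is a triangle-free planar quadrangulation with $4n$ vertices, $8n - 4 = 2|V|-4$ edges, and diameter $n+1$, supplying a sequence with $k=2$, $c=4$. For $\P_3$ I would use a "long triangular bipyramid": two apex vertices $a, b$, together with $n$ triangular layers $T_1,\ldots,T_n$, where each pair of consecutive layers is joined by a triangulated prism band (three rungs plus one diagonal per lateral quadrilateral), $a$ is joined to all of $T_1$, and $b$ is joined to all of $T_n$. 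A direct count gives $3n+2$ vertices and exactly $9n = 3|V|-6$ edges, and any path from $a$ to $b$ must traverse all $n$ bands in order, so the diameter is $n+1$; this yields $k=3$, $c=6$.

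The only real obstacle is realising the three conditions (prescribed girth, edge-density within an \emph{absolute} constant of the Euler bound, and unbounded diameter) simultaneously for the $\P_3$ and $\P_4$ lower-bound families. Generic dense constructions with unbounded diameter, such as triangulated or quadrangulated grids, ladders, or strips, leave an outer face of length $\Omega(\sqrt{|V|})$ that eats into the Euler bound by more than any fixed constant; conversely, bona fide triangulations and quadrangulations such as apex constructions or Apollonian networks tend to have bounded diameter because a few "universal" vertices collapse distances. The triangular bipyramid and $C_4 \mathbin{\square} P_n$ are chosen precisely so that every long graph path is forced to traverse the whole sequence of layers, keeping the Euler bound saturated up to an absolute constant while letting the diameter grow linearly in $n$.
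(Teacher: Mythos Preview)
Your proof is correct and follows essentially the same approach as the paper: upper bounds via Euler's formula plus Corollary~\ref{2} (equivalently Theorem~\ref{23}) and Kim et al.\ plus Theorem~\ref{TreeStar}, lower bounds via Lemma~\ref{lemma-lowerbound}. The only difference is that you supply explicit families ($C_4 \mathbin{\square} P_n$ and the stacked triangular bipyramid) where the paper simply asserts that planar triangulations and quadrangulations of arbitrary diameter exist; your constructions are valid and your edge and diameter counts check out.
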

\begin{proof}
By Euler's formula $\Upsilon(\P_3)=3$ and hence by Theorem~\ref{23}, $\Upsilon_{bd}(\P_3)\le 4$. Since there exist planar triangulations of arbitrary diameter (and hence $|E(G)|=3|V(G)|-6$), it follows from Lemma~\ref{lemma-lowerbound} that $\Upsilon_{bd}(\P_3) = 4$. Similarly by Euler's formula $\Upsilon(\P_4)=2$. By Theorem~\ref{23}, $\Upsilon(\P_4)\le 3$. Since there exist triangle-free planar graphs of arbitrary diameter with $|E(G)|=2|V(G)|-4$, it follows from Lemma~\ref{lemma-lowerbound} that $\Upsilon(\P_4)=3$.

For $g\ge 5$, clearly $\Upsilon_{bd}(\P_g)\ge 2$. By Kim et al.~\cite{NDT2}, every planar graph of girth at least six can be decomposed into a forest and a matching. Thus by Theorem~\ref{TreeStar}, every planar graph of girth at least six can be decomposed into two forests whose components have diameter at most 18. Hence $\Upsilon_{bd}(\P_6)=2$ and $\Upsilon_{bd}(\P_g)=2$ for all $g\ge 6$.
\end{proof}

Notice that Lemma~\ref{lemma-dual} still holds when $G^*$ has multiple edges. Thus we have the following corollary.

\begin{corollary}
Every 6-edge-connected planar (multi)graph contains two edge-disjoint $\frac{18}{19}$-thin spanning trees. 
\end{corollary}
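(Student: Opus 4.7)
The plan is to apply Lemma~\ref{lemma-dual} to the planar dual of the given multigraph, after verifying that this dual lies in $\P_6$ so that Theorem~\ref{thm-planar} supplies a 2-decomposition with small-diameter components.

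First, I would fix a plane embedding of the 6-edge-connected planar multigraph $H$ and form its planar dual $G = H^*$. By planar duality, cycles in $G$ correspond bijectively to minimal edge cuts in $H$; since $H$ is 6-edge-connected, every such cut, and hence every cycle of $G$, has length at least $6$. This simultaneously rules out loops (which would be 1-cycles) and parallel edges (2-cycles) in $G$, so $G$ is a \emph{simple} planar graph of girth at least $6$; that is, $G \in \P_6$.

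Second, I would invoke the argument used in the proof of Theorem~\ref{thm-planar} for the girth-$6$ case, not merely its statement. There, Kim et al.'s decomposition of a girth-$6$ planar graph into a forest and a matching is combined with Theorem~\ref{TreeStar} to produce a decomposition into two forests whose components have diameter at most $18$. Applied to $G$, this yields $\Upsilon_{18}(G) \le 2$.

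Finally, I would apply Lemma~\ref{lemma-dual} to $G$ with $d = 18$. The lemma as stated requires only that $G$ be planar, and the authors explicitly note just before this corollary that the proof of Lemma~\ref{lemma-dual} goes through without change when $G^*$ is permitted to have multiple edges; the only facts it uses are the bijection $E(G) \leftrightarrow E(G^*)$ and that cuts in $G^*$ correspond to edge-disjoint unions of cycles in $G$, neither of which is affected by parallel edges in the dual. Since $H$ is connected we have $G^* = (H^*)^* = H$, and the conclusion of Lemma~\ref{lemma-dual} gives two edge-disjoint $\frac{18}{19}$-thin spanning trees in $H$. I do not foresee a genuine obstacle here; the one point requiring care is the asymmetry of the duality — the graph to which the bounded-diameter-arboricity machinery is applied must be the simple graph $H^*$, while the multigraph $H$ appears only as the dual in the final step.
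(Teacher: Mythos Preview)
Your argument is correct and matches the paper's proof essentially step for step: pass to the planar dual, use 6-edge-connectivity to see it is simple of girth at least $6$, apply the girth-$6$ case of Theorem~\ref{thm-planar} to obtain $\Upsilon_{18}\le 2$, and then invoke Lemma~\ref{lemma-dual} (in its multigraph-dual form) to return to the original graph. The only cosmetic difference is that you name the multigraph $H$ and its dual $G$, whereas the paper does the reverse.
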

\begin{proof}
Let $G$ be a $6$-edge-connected planar (multi)graph. As $G$ is $6$-edge-connected, it follows that the dual $G^*$ of $G$ is a simple planar graph of girth at least six. As in Theorem~\ref{thm-planar}, we find that $\Upsilon_{18}(G^*)=2$. By Lemma~\ref{lemma-dual}, $(G^*)^*=G$ contains two edge-disjoint $\frac{18}{19}$-thin spanning trees.
\end{proof}

\section{Open problems}

As we have seen, bounded diameter arboricity differs from star arboricity for the class of planar graphs (5 instead of 4). The only missing case in Theorem~\ref{thm-planar} is $g=5$. Clearly, $2 \leq \Upsilon_{bd}(\P_5) \leq \Upsilon_{bd}(\P_4) = 3$. We conjecture that the following holds.

\begin{conjecture}
$\Upsilon_{bd}(\P_5)=2$.
\end{conjecture}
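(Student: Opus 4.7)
The lower bound $\Upsilon_{bd}(\P_5) \geq 2$ is immediate since $\P_5$ contains cycles. For the upper bound, my plan is to mirror the argument used for girth at least 6 in Theorem~\ref{thm-planar}: by Theorem~\ref{TreeStar}, it suffices to show that every planar graph of girth at least 5 decomposes into a forest and a star forest. Call this Claim A.

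To prove Claim A I would proceed by induction on $|V(G)|$ combined with a discharging argument. The edge count $|E(G)| \leq \tfrac{5}{3}(|V(G)|-2)$ from Euler's formula shows the fractional arboricity is well below 2, leaving room for such a decomposition. In a minimum counterexample $G$ one quickly argues $\delta(G) \geq 2$. The crucial step is to identify reducible configurations around vertices of degree 2 and 3: one would delete such a vertex $v$, apply induction to obtain a forest + star forest decomposition of $G-v$, and then reinsert the edges at $v$, arguing that this can always be done without creating a cycle in the forest and without violating the star-forest structure (the latter requires that at most one reinserted edge lands at a vertex already serving as a non-center leaf in the star forest). A discharging analysis — assigning initial charges $d(v) - \tfrac{10}{3}$ to each vertex and $\ell(f) - \tfrac{10}{3}$ to each face, with rules sending charge from larger faces and high-degree vertices to the low-degree configurations — would then verify that every planar graph of girth at least 5 contains a reducible configuration.

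Once Claim A is established, applying Theorem~\ref{TreeStar} to the resulting forest + star forest decomposition immediately yields $\Upsilon_{18}(G) \leq 2$ for every $G \in \P_5$, which is the desired upper bound.

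The main obstacle is Claim A itself. Unlike the girth-6 case, where Kim et al.~\cite{NDT2} directly supply the stronger forest + matching decomposition, no analogous result appears to be immediately available for girth 5. The Nine Dragon Tree theorem yields only a forest together with a forest whose components have at most 4 edges, and chaining two applications of Theorem~\ref{TreeStar} in that setting produces three forests rather than two, so one cannot conclude along those lines. The success of this plan therefore hinges on a genuinely new structural theorem whose proof likely requires a delicate discharging analysis balancing the conflicting demands of the forest and star-forest parts. A viable alternative would be to generalize Theorem~\ref{TreeStar} itself, replacing the star forest by a forest whose components have at most $k$ edges; combined with the Nine Dragon Tree theorem this would suffice, but would require substantially reworking the outing, rebellious-edge and tame-coloring machinery of Section~2 so that it accommodates components that are no longer stars — in particular, the two-vertex coloring that records the "star color + leaf deviation" would need to be replaced by a coloring that tracks a richer local structure inside each small component.
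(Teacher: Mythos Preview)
This statement is a \emph{conjecture} in the paper's open-problems section; the paper gives no proof. Your proposal is therefore not being compared against an existing argument but assessed as a research plan.

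Your overall strategy coincides with what the paper itself suggests: the sentence immediately following the conjecture says that it would follow from Theorem~\ref{TreeStar} if every planar graph of girth~5 is the union of a forest and a star forest, and this is then posed as an explicit open question. So your Claim~A is not a lemma you can expect to prove by routine means --- it is precisely the missing ingredient the authors identify.

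The genuine gap in your proposal is that the discharging sketch for Claim~A is only a heading, not an argument. You have not exhibited a single reducible configuration, and this is where the real difficulty lies: for a ``forest plus star forest'' decomposition, reducibility is delicate because when you reinsert edges at a deleted low-degree vertex $v$ you must simultaneously (i) avoid closing a cycle in the forest part and (ii) respect the star structure, which constrains not just degrees but the center/leaf roles of the neighbours of $v$. A degree-2 vertex with both neighbours already serving as leaves of distinct stars, for instance, is not obviously reducible, and your outline gives no mechanism for handling such cases. The charge assignment $d(v)-\tfrac{10}{3}$, $\ell(f)-\tfrac{10}{3}$ is the standard girth-5 setup, but without a verified list of reducible configurations there is nothing for discharging to feed into. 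Your assessment of the alternatives is accurate: the Nine Dragon Tree route gives only components of size at most~4, and extending Theorem~\ref{TreeStar} to handle that (your Conjecture~\ref{TreeBush} with $d=4$) is itself open. In short, you have correctly located the obstacle but not moved it.
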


This conjecture would be implied by Theorem~\ref{TreeStar} if the answer to the following question is affirmative.

\begin{question}
Is every planar graph of girth 5 the union of a forest and a star forest?
\end{question}

As before, a positive answer to this question would also imply that every 5-edge-connected planar graph contains two disjoint $\frac{18}{19}$-thin spanning trees. It is not even known whether there exists an $\varepsilon$ such that every 5-edge-connected planar graph contains an $\varepsilon$-thin spanning tree.
 
For the general problem, Theorem~\ref{TreeStar} suggests a strategy for proving Conjecture~\ref{PlusOneConj}. We conjecture the following generalization of Theorem~\ref{TreeStar} holds.

\begin{conjecture}\label{TreeBush}
For all natural numbers $d\ge 1$, there exists a natural number $f(d)$ such that the following holds: If $G$ is the union of a forest and a second forest whose components have diameter at most $d$, then $G$ can be partitioned into two forests each of whose components have diameter at most $f(d)$.
\end{conjecture}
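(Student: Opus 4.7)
The plan is to prove Conjecture~\ref{TreeBush} by induction on $d$. The base case $d \le 2$ follows from Theorem~\ref{TreeStar}, since a forest with components of diameter at most $2$ is a star forest; take $f(2)=18$. For the inductive step with $d \ge 3$, given $G = F_1 \cup F_2$ where $F_2$ has components of diameter at most $d$, let $L$ be the set of leaves of $F_2$ and set $P = V(G) \setminus L$. Decompose $F_2 = F_2^{\mathrm{out}} \cup F_2^{\mathrm{in}}$, where $F_2^{\mathrm{out}}$ consists of the edges of $F_2$ incident to $L$ (a star forest with centers in $P$) and $F_2^{\mathrm{in}}$ is a forest whose components have diameter at most $d-2$. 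Applying the induction hypothesis to $F_1 \cup F_2^{\mathrm{in}}$ produces a decomposition into two forests $A_1, A_2$ whose components have diameter at most $f(d-2)$.

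The crux is to extend $A_1, A_2$ to a two-forest decomposition of $G$ by distributing the edges of $F_2^{\mathrm{out}}$ between them. For each star of $F_2^{\mathrm{out}}$ centered at some $p \in P$ with leaves $\ell_1, \ldots, \ell_k$, each edge $p\ell_i$ must be assigned to $A_1$ or $A_2$ without creating a cycle and without unbounded growth of monochromatic paths. Because $A_1$ and $A_2$ already contain all of $F_1$, which may connect each $\ell_i$ nontrivially to the components of $A_1$ or $A_2$ containing $p$, the assignment is far from automatic; indeed, if $p$ and $\ell_i$ happen to lie in a common tree of both $A_1$ and $A_2$, then no single-edge placement works at all.

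I expect this extension step to be the main obstacle. A na\"ive application of Theorem~\ref{TreeStar} to $A_1 \cup F_2^{\mathrm{out}}$ yields three forests rather than two, so the inductive output must itself be constrained. The plan is to strengthen the inductive hypothesis, tracking an auxiliary invariant that guarantees the pair $(A_1, A_2)$ is ``star-augmentable'' at the vertices that serve as centers at the next level, most naturally phrased in terms of a tame vertex $2$-coloring in the spirit of Section~2. An alternative that sidesteps the peel-off entirely is to generalize the outing framework directly: define a \emph{deep outing} $(F, T)$ where $F$ is an out-forest with components of diameter at most $d$ (rooted at centers, so depth at most $\lceil d/2 \rceil$), and re-prove analogs of Lemmas~\ref{lemma-path} through~\ref{ExTame}. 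In either approach the hardest piece is the analog of Lemma~\ref{lemma-path}: the short case analysis there relied on depth-$1$ stars, whereas for $F$ of depth $\lceil d/2 \rceil$ a monochromatic dipath can probe that deeply into a component of $F$, requiring a more delicate induction and a refined notion of rebellious and tame. With care, the resulting $f(d)$ should grow polynomially in $d$.
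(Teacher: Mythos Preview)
The paper does not prove this statement: it is stated as Conjecture~\ref{TreeBush} in Section~4 and left open, with only the case $d\le 2$ confirmed via Theorem~\ref{TreeStar}. So there is no ``paper's own proof'' to compare against.

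Your proposal is likewise not a proof but a plan, and you have correctly identified the genuine obstruction yourself. In the inductive step you obtain forests $A_1,A_2$ of bounded diameter covering $F_1\cup F_2^{\mathrm{in}}$, and must then absorb the star forest $F_2^{\mathrm{out}}$ into $A_1\cup A_2$. The difficulty is real: for a leaf edge $p\ell$, the vertex $\ell$ may already lie in the same tree as $p$ in \emph{both} $A_1$ and $A_2$ (since $F_1$ is unconstrained), so inserting $p\ell$ into either forest creates a cycle. Nothing in the inductive output prevents this, so the recursion as stated does not close. Your two suggested fixes---strengthening the inductive hypothesis with a ``star-augmentable'' invariant, or generalizing outings to depth-$\lceil d/2\rceil$ rooted forests and redoing Lemmas~\ref{lemma-path}--\ref{ExTame}---are reasonable directions, but neither is carried out, and the analog of Lemma~\ref{lemma-path} for deep outings is precisely where the short depth-$1$ case analysis breaks down. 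Until one of these is made to work, the conjecture remains open; your write-up is a research outline, not a proof.
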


Thus our main result confirms this conjecture when $d\le 2$ with $f(2)\le 18$.

One may also wonder if there is a stronger variant of Conjecture~\ref{PlusOneConj}. This could be possible if we allow the arboricity to be fractional. The \emph{fractional arboricity} $\Upsilon_f(G)$ is defined as $\max_{H\subseteq G} \frac{|E(H)|}{|V(H)|-1}$. Note that $\lceil \Upsilon_f(G) \rceil = \Upsilon(G)$ by Nash-Williams' result. A major open question is whether the structure of the forests can be restricted when the fractional arboricity is strictly smaller (asymptotically) than the arboricity. In particular, Montassier et al.~\cite{NDT} formulated the Nine Dragon Tree Conjecture as follows.

\begin{conjecture}[Nine Dragon Tree Conjecture]\label{NDT}
Let $G$ be a graph and $k,d$ natural numbers with $k,d\geq 1$. If $\Upsilon_f(G)\le k + \frac{d}{k+d+1}$, then $G$ can be decomposed into $k+1$ forests at least one of which has maximum degree $d$.
\end{conjecture}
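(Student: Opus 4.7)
The plan is to attack Conjecture~\ref{NDT} by a minimum-counterexample argument combined with a discharging / potential-function analysis. Suppose a counterexample exists and let $G$ be one minimizing $|E(G)|+|V(G)|$. Clearing denominators, the hypothesis $\Upsilon_f(G)\le k+\frac{d}{k+d+1}=\frac{(k+1)(k+d)}{k+d+1}$ gives that every subgraph $H$ satisfies $(k+d+1)|E(H)|\le (k+1)(k+d)(|V(H)|-1)$. I would first split into two cases according to whether $G$ contains a proper subgraph $H\subsetneq G$ that is \emph{tight}, i.e.\ achieves equality in this bound.

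If a proper tight subgraph $H$ exists, apply induction separately to $H$ and to the graph $G/H$ obtained by contracting $V(H)$ to a single vertex. Both inherit the fractional arboricity hypothesis (standard submodularity of the density function), so each admits a decomposition into $k+1$ forests one of which has maximum degree at most $d$. The core technical task is then to glue the two decompositions across the boundary $\sigma_G(V(H))$; this would be done by a Hall-type matching on the boundary edges, assigning each boundary edge to one of the $k+1$ forest-indices so that acyclicity is preserved and the degree bound on the distinguished forest is not violated. If no proper tight subgraph exists then the inequality is strict on every $H\subsetneq G$, which yields global ``slack'' of at least $\tfrac{1}{k+d+1}$ that can be spent later.

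In the no-tight-subgraph case, I would look for a reducible vertex: the density bound forces a vertex $v$ whose degree is small (roughly $\deg(v)\le k+1$, or of a bounded-size configuration of low-degree vertices). Delete such a configuration, apply induction to obtain the desired decomposition of the smaller graph, and then extend across the removed edges. Extending is nontrivial because the ``degree-$d$'' forest must not receive too many edges at $v$; the main tool here would be augmenting-path arguments in the $k+1$ forests (rotating ownership of edges along an alternating path) to free up a legal slot for each new edge at $v$, using the global slack from the previous paragraph to guarantee such rotations terminate.

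The hard part, and where I would expect the argument to really bite, is the tightness of the fractional threshold: since $k+\frac{d}{k+d+1}$ is known to be sharp, any reducible configuration and any discharging scheme must exactly match this constant, leaving essentially no room for slack in the local analysis. In particular I expect that a crude averaging will fail and one must design a discharging rule that transfers potential from ``surplus'' vertices (those of degree much larger than the average) to neighborhoods of the reducible configuration, balancing the deficit $\frac{d}{k+d+1}$ against the excess concentrated near high-degree vertices. I also anticipate that the base case $k=1$ (essentially the original Nine Dragon Tree statement for graphs of maximum average degree less than $2+\tfrac{d}{d+2}$) will need a dedicated, more delicate argument before the induction on $k$ can be set up cleanly.
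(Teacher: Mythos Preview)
The statement you are attempting to prove is not proved in the paper at all: it is Conjecture~\ref{NDT}, the Nine Dragon Tree Conjecture, and is stated in Section~4 (``Open problems'') purely as motivation for the authors' own Conjecture~\ref{FracBush}. The paper only records the partial results of Montassier et al.\ ($k=1$, $d\le 2$) and Kim et al.\ ($k=1$, $d\le 6$) and offers no argument toward the general case. There is therefore no ``paper's own proof'' against which your proposal can be compared.

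As for the proposal itself, it is a plausible-sounding outline but not close to a proof. Several of the steps you describe as routine are in fact the entire difficulty. In the tight-subgraph branch, contracting $H$ and decomposing $G/H$ is fine, but ``gluing the two decompositions across $\sigma_G(V(H))$ by a Hall-type matching'' is not a known technique for this problem: after contraction the $k+1$ forests of $G/H$ restrict to forests on $G-V(H)$, but when you uncontract, a single forest of $G/H$ may have several edges entering $V(H)$, and there is no reason the forest labels can be permuted or the boundary edges reassigned so that (i) each color class stays acyclic inside $G$ and (ii) the degree-$d$ bound on the distinguished forest survives on both sides simultaneously. A Hall condition on boundary edges does not encode acyclicity. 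In the no-tight-subgraph branch, the ``global slack of $\tfrac{1}{k+d+1}$'' you invoke is a slack in an edge-count inequality, not a structural resource that an augmenting-path rotation can spend; you have not specified what potential function the rotations decrease, nor why rotations that fix the forest structure do not cascade and destroy the degree-$d$ constraint elsewhere. Finally, your closing remark that ``the base case $k=1$ will need a dedicated, more delicate argument'' is an acknowledgment that the scheme does not actually cover even the smallest open instance. In short, the proposal identifies the right \emph{kind} of machinery (minimal counterexample, tight subgraphs, local reductions) but none of the actual lemmas, and the paper provides nothing to fill those gaps because it never claims to.
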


They proved Conjecture~\ref{NDT} for $k=1$ and $d\le 2$. Kim et al.~\cite{NDT2} proved the conjecture for $k=1$ and $d\le 6$.  The \emph{Strong Nine Dragon Tree Conjecture} states that for such graphs at least one of the forests in the decomposition has components of size at most $d$ (and hence diameter at most $d$ as well). In light of Conjecture~\ref{TreeBush} and the Strong Nine Dragon Tree Conjecture, we also make the following strong conjecture.

\begin{conjecture}\label{FracBush}
For every natural number $k$ and real number $\varepsilon > 0$, there exists $d(k,\epsilon)$ such that the following holds: if $\Upsilon_f(G) \le k-\epsilon$ for a graph $G$, then $\Upsilon_{d(k,\epsilon)}(G)\le k$.
\end{conjecture}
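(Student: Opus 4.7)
The plan is to derive Conjecture~\ref{FracBush} conditionally from the Strong Nine Dragon Tree Conjecture together with Conjecture~\ref{TreeBush}, both of which appear in this section. First, I would choose $d$ to be the smallest positive integer with $d \ge \frac{(1-\varepsilon)k}{\varepsilon}$; an elementary rearrangement shows this is equivalent to $k - \varepsilon \le (k-1) + \frac{d}{(k-1)+d+1}$. Since $\Upsilon_f(G) \le k-\varepsilon$, the Strong Nine Dragon Tree Conjecture applied with parameters $k-1$ and $d$ yields a decomposition of $G$ into $k$ forests $F_1, \ldots, F_k$ in which $F_k$ has components of size (and thus diameter) at most $d$.

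Next I would iteratively apply Conjecture~\ref{TreeBush} to ``spread'' the bounded-diameter property across all $k$ forests. Let $f$ be the function guaranteed by Conjecture~\ref{TreeBush}, and set $d_0 = d$ and $d_i = f(d_{i-1})$. At step $i \in \{1, \ldots, k-1\}$, we maintain a forest $H_{i-1}$ (initially $H_0 = F_k$) whose components have diameter at most $d_{i-1}$, together with the untouched forests $F_1, \ldots, F_{k-i}$ and a list of $i-1$ ``finished'' forests produced by previous steps. We then apply Conjecture~\ref{TreeBush} to the union $F_{k-i} \cup H_{i-1}$ to repartition its edges into two forests of components of diameter at most $d_i$; one becomes the new $H_i$ and the other is set aside as finished. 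After $k-1$ steps, every forest in the resulting decomposition has components of diameter at most $d_{k-1}$, so taking $d(k,\varepsilon) := d_{k-1}$ gives $\Upsilon_{d(k,\varepsilon)}(G) \le k$ as required.

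The main obstacle is that both ingredients are themselves open. The Strong Nine Dragon Tree Conjecture is known only in a handful of cases (such as Kim et al.~\cite{NDT2} for $k=1$ and $d \le 6$), and Conjecture~\ref{TreeBush} is verified by this paper's Theorem~\ref{TreeStar} only for $d \le 2$, where $f(2) \le 18$. The approach above is therefore a conditional proof reducing Conjecture~\ref{FracBush} to the conjunction of these two statements. A natural route towards making it unconditional would be to first establish Conjecture~\ref{TreeBush} for the next case $d=3$ (likely via a refined version of the ``tame outing'' machinery of Section~2 that tracks the diameter of the non-star forest), and, in parallel, to secure enough of the Strong Nine Dragon Tree Conjecture to apply it with $d$ growing like $\frac{(1-\varepsilon)k}{\varepsilon}$; even a weaker replacement that produces a forest of bounded component \emph{diameter} (rather than size) would suffice, so one could also try to bypass the Strong Nine Dragon Tree Conjecture entirely by proving such a ``diameter'' variant directly from the fractional arboricity hypothesis.
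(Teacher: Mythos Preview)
The statement is posed in the paper as an open conjecture; the paper gives no proof. What the paper does say is that Conjecture~\ref{FracBush} is made ``in light of Conjecture~\ref{TreeBush} and the Strong Nine Dragon Tree Conjecture,'' and your proposal makes that hint precise: you show, correctly, that Conjecture~\ref{FracBush} is implied by the conjunction of those two conjectures. The arithmetic check $k-\varepsilon \le (k-1)+\tfrac{d}{k+d}$ for $d\ge (1-\varepsilon)k/\varepsilon$ is right, and the iterative scheme---combining the bounded-diameter forest with one unrestricted forest at a time via Conjecture~\ref{TreeBush}---does propagate the diameter bound across all $k$ forests.

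Two small technicalities. First, the finished forest produced at step $i$ has diameter at most $d_i$, not $d_{k-1}$, so one should set $d(k,\varepsilon)=\max_i d_i$ (or, equivalently, replace $f$ by a non-decreasing majorant). Second, the Nine Dragon Tree Conjecture in the paper is stated for parameters $\ge 1$, so applying it with $k-1$ needs $k\ge 2$; the case $k=1$ is vacuous since $\Upsilon_f(G)\le 1-\varepsilon$ forces $G$ to be edgeless. Neither point affects the validity of the reduction. As you yourself stress, the argument is conditional on two open conjectures and therefore does not resolve Conjecture~\ref{FracBush}.
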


\end{document}